\newtheorem{thm}{Theorem}%[section]
\newtheorem{cor}[thm]{Corollary}
\newtheorem{lem}[thm]{Lemma}
\newtheorem{prop}[thm]{Proposition}
\newtheorem*{thm*}{Theorem}
\theoremstyle{definition}
\newtheorem{dfn}[thm]{Definition}
\theoremstyle{remark}
\newtheorem{rem}[thm]{Remark}
\numberwithin{equation}{section}
\newcommand{\norm}[1]{\left\Vert#1\right\Vert}
\newcommand{\set}[1]{\left\{#1\right\}}
\newcommand{\br}[1]{\left[#1\right]}
\newcommand{\Br}[2]{ \left[#1 \ \big| \ #2 \right] }
\newcommand{\sr}[1]{\left(#1\right)}
\newcommand{\Integer}{\mathbb{Z}}
\newcommand{\Z}{\Integer}
\newcommand{\N}{\mathbb{N}}
\DeclareMathOperator{\E}{\mathbb{E}}     % Without under-subscripts
\renewcommand{\Pr}{}
\let\Pr\relax
\DeclareMathOperator{\Pr}{\mathbb{P}}
\newcommand{\1}[1]{\mathbf{1}_{\set{ #1 } }}
\newcommand{\ov}[1]{\overline{#1}}
\newcommand{\p}{\partial}
\newcommand{\F}{\mathcal{F}}
\newcommand{\dist}{\mathrm{dist}}
\begin{document}

\title{Entropy of Random Walk Range}
%{\small Working Draft} }

\author{Itai Benjamini\thanks{Faculty of Mathematics and Computer Science, The
Weizmann Institute of Science, Rehovot 76100, Israel. Email:
\texttt{\{itai.benjamini,gady.kozma,ariel.yadin\}@weizmann.ac.il}.} \and Gady
Kozma\footnotemark[1]
%\thanks{Faculty of Mathematics and Computer Science, The
%Weizmann Institute of Science, Rehovot 76100, Israel. Email:
%\texttt{gady.kozma@weizmann.ac.il}.}
\and Ariel
Yadin\footnotemark[1]
%\thanks{Faculty of Mathematics and Computer Science, The
%Weizmann Institute of Science, Rehovot 76100, Israel. Email:
%\texttt{ariel.yadin@weizmann.ac.il}.}
\and Amir
Yehudayoff\thanks{School of Mathematics, Institute for Advanced
Study, Princeton NJ, 08540. Email:
\texttt{amir.yehudayoff@gmail.com}. Partially supported by NSF
grant CCF 0832797.} }

\date{ }

\maketitle

\begin{abstract}
We study the entropy of the set traced by an $n$-step random walk on $\Z^d$.
We show that for $d \geq 3$, the entropy is of order $n$.
For $d = 2$, the entropy is of order $n/\log^2 n$.
These values are essentially governed by the size of the boundary of the trace.
\end{abstract}

\section{Introduction}

%Random walks are a standard model for many random processes.
A natural observable of a random walk is its \emph{range},
the set of positions it visited. In this note we study the entropy of this range --
roughly, how many bits of information are needed in order to describe it.
We calculate the entropy of the range of a random walk on $\Z^d$, $d \in \N$, up to constant factors.

\subsection{Main Result}

%Let $d \geq 2$ be any dimension.
Let $S(0),\ldots,S(n)$ be a simple symmetric nearest-neighbor random walk on $\Z^d$, $d \in \N$,
of length $n$.
Define the \emph{range} of the random walk to be
$$ R(n) = \set{ S(0), S(1), \ldots, S(n) } , $$
the set of vertices visited by the walk.

In this note we study the entropy of $R(n)$ as a function of $n$
(for formal definition of entropy, see Section~\ref{sec: ent}).
We calculate the value of the entropy, $H(R(n))$, up to constants, precisely:

\begin{thm}
\label{thm: main}
For $d = 2$ there exist constants $c_2, C_2 > 0$ such that for all $n \in \N$,
$$ c_2 \frac{n}{\log^2(n)} \leq H(R(n)) \leq C_2 \frac{n}{\log^2(n)},$$
and for $d \geq 3$ there exist constants $c_d, C_d > 0$ such that for all $n \in \N$,
$$c_d n \leq H(R(n)) \leq C_d n.$$
\end{thm}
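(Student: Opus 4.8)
The plan is to show that the entropy of the range is, up to constants, the expected size of its boundary, and then to estimate that boundary in each dimension. Write $\p R(n) = \set{x \in R(n) : \exists\, y \sim x,\ y \notin R(n)}$ for the (inner) boundary and $\p_{\mathrm{out}} R(n)$ for the outer boundary; these have comparable expected sizes. The heart of the matter is the pair of estimates $\E\abs{\p R(n)} \asymp n$ for $d \geq 3$ and $\E\abs{\p R(n)} \asymp n/\log^2 n$ for $d = 2$, together with the claim that $H(R(n))$ is governed by this quantity from both sides.

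For the \textbf{upper bound} I would exploit that $R(n)$ is connected and contains the origin, so it is determined by its outer boundary: $R(n)$ is exactly the connected component of $0$ in $\Z^d \setminus \p_{\mathrm{out}} R(n)$, since any lattice path leaving $R(n)$ must step onto an unvisited neighbor of a visited site, i.e. onto $\p_{\mathrm{out}} R(n)$. Hence $H(R(n)) \leq H(\p_{\mathrm{out}} R(n))$, and it suffices to encode the boundary cheaply. The naive bound $H(\p_{\mathrm{out}} R(n)) \leq \sum_x H(\1{x \in \p_{\mathrm{out}}R(n)})$ is too lossy, as it pays a $\log(1/p)$ cost to locate each rare boundary point and thereby loses a factor of $\log\log n$. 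Instead I would reveal $R(n)$ by a breadth-first exploration from the origin, paying for each explored site only the conditional entropy of its neighbor-occupation pattern given the past transcript; by the chain rule these sum to exactly $H(R(n))$. Interior sites should contribute essentially nothing, because conditioned on the exploration history they are almost surely surrounded, while the $\asymp \E\abs{\p R(n)}$ boundary sites each cost $O(1)$. Making ``contributes essentially nothing'' quantitative, i.e. showing the summed conditional entropies of interior sites are $o(\E\abs{\p R(n)})$, is the delicate point and is where the spatial clustering of the trace must be used.

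For the \textbf{lower bound} I would produce $\asymp \E\abs{\p R(n)}$ bits that are measurable with respect to $R(n)$ and jointly carry positive entropy. In $d \geq 3$ this is comparatively clean: fix a sublattice of spacing a large constant $L$ inside the region the walk typically explores; by transience the indicators $\1{x \in R(n)}$ over this sublattice are weakly correlated, with correlations controlled by ratios of the Green's function and hence small for large $L$, each has visiting probability bounded below, and there are $\asymp n$ of them, so $H(R(n)) \geq c\, n$. In $d = 2$ spatial decorrelation is far too slow for this, so instead I would condition on a coarse skeleton of the walk and extract, from each of $\asymp n/\log^2 n$ essentially independent excursions, a single local feature of the trace (a boundary ``bump'' that is present or absent) recoverable from $R(n)$; here independence across excursions, rather than spatial separation, supplies the entropy.

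Finally, the boundary estimates are pure random-walk computations. For a typical site $x$ at distance $\asymp \sqrt n$ one factors $\Pr[x \in \p R(n)] = \Pr[x \in R(n)] \cdot \Pr[\text{some neighbor of } x \text{ unvisited} \mid x \in R(n)]$; in $d \geq 3$ both factors are $\asymp 1$ by transience, while in $d = 2$ each is $\asymp 1/\log n$ by the logarithmic recurrence estimates, giving the $1/\log^2 n$ rate, and summation over the $\asymp n$ typical sites yields $\E\abs{\p R(n)}$. I expect the main obstacle to be the $d = 2$ analysis: both the sharp exponent in the $\log^2 n$ boundary count and, in the upper bound, the removal of the parasitic $\log\log n$ factor require genuine control of the local geometry of the two-dimensional trace rather than soft entropy inequalities.
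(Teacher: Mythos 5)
There is a genuine gap on both sides, and on the harder side it is exactly the step you concede. Your upper bound in $d=2$ is a plan, not a proof: the claim that in a breadth-first exploration the interior sites' conditional entropies sum to $o(\E\abs{\p R(n)})$, and that the parasitic $\log\log n$ location cost can be removed, \emph{is} the theorem, and you offer no mechanism for it. Per-site conditioning on the exploration past is a single-scale, local device, and it is not clear it can exploit the spatial clustering of $\p R(n)$; the paper instead runs a multiscale renormalization. Tiling $\Z^2$ by boxes $Q(z,k)$, let $I(z,k,n)$ indicate $\p R(n)\cap Q(z,k)\neq\emptyset$ and $M(k,n)=\sum_z I(z,k,n)$. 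The two key estimates are: (i) given that the walk reaches a box of side $k$, the box contains a point of $\p R(n)$ with probability at most $C\log^2 k/\log n$ (each of order $\log k\cdot\log n$ excursions past the box covers any fixed point of it with probability at least $c/\log k$, so the box fails to be swallowed whole only with that small probability), which combined with hitting estimates gives $\E[M(k,n)]\le C\max\set{1,\,(n/k^2)\cdot\log^2 k/\log^2 n}$; and (ii) only sub-boxes of ``active'' boxes can be active, so the conditional entropy of the scale-$k_j$ indicator vector given the scale-$k_{j+1}$ vector is at most $9\,\E[M(k_{j+1},n)]$ when $k_{j+1}=3k_j+1$. The chain rule then telescopes over scales to $H(\p(0,n))\le C n/\log^2 n$, and the scale-zero vector determines $R(n)$ (your observation that $R(n)$ is the component of $0$ in the complement of its boundary is the same reconstruction step, and is fine). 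Something of this global, multiscale character is what your ``delicate point'' requires; note also that for $d\ge 3$ no boundary encoding is needed at all, since $R(n)$ is determined by the trajectory and there are at most $(2d)^n$ trajectories, giving $H(R(n))\le n\log(2d)$ outright.

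Your lower bounds are also incomplete, and both constructions can be replaced by one elementary device you missed. For any fixed set $A$, applying the strong Markov property at successive visits to $\p A$ shows that at each such visit the walk steps outside $A$ with probability at least $1/2d$, which would contradict $R(n)=A$; hence $\Pr[R(n)=A]\le(1-\frac{1}{2d})^{\abs{\p A}-1}$, and therefore $H(R(n))\ge c_d\,\E[\abs{\p R(n)}]$ directly. Combined with your (correct) factorization $\Pr[z\in\p R(n)]\ge f_d(n)\Pr[T_z\le n]$, where $f_d(n)$ is the probability a neighbor stays unvisited for $n$ steps, and the standard asymptotics for $\E\abs{R(n)}$, both dimensions follow at once. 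As written, your $d\ge 3$ argument does not close: weak pairwise correlation of the indicators $\1{x\in R(n)}$, controlled by Green's function ratios, does not lower bound their \emph{joint} entropy --- you would need something like $H\big(\1{x\in R(n)}\,\big|\,\text{all other indicators}\big)\ge c$ for order $n$ sites, a strictly stronger conditional statement that transience alone does not hand you. Similarly, your $d=2$ excursion-bit scheme (which parallels a construction the paper only sketches, in its Section 3.1, via pairs of symmetric local configurations) needs the extracted bits to be simultaneously measurable with respect to $R(n)$ and conditionally unbiased given the skeleton; that compatibility is exactly what the explicit symmetric-pattern trick provides and what your sketch omits. Your heuristic $\Pr[x\in\p R(n)]\approx\Pr[x\in R(n)]\cdot\Pr[\text{some neighbor unvisited}\mid x\in R(n)]$ with each factor of order $1/\log n$ in two dimensions is the right picture and matches the paper's computation, but for the theorem only the lower bound on $\E[\abs{\p R(n)}]$ is needed, and only via the exponential bound above.
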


The proof of Theorem~\ref{thm: main} is organized as follows: we
first prove the lower bound which is easier and follows directly
from estimates on the size of the boundary of the range; in two
dimensions the boundary of the range of the walk is of order
$n/\log^2 n$, and in higher dimensions it is linear in $n$. This
is done in Section~\ref{sec: lower}. We then show the upper bound
which requires a certain renormalization argument. An interesting
feature of the procedure is that at each step of the
renormalization process, the number of ``active'' boxes is not
determined by examining the previous renormalization step, but
rather globally. This is done in Section~\ref{sec: upper}.
%consists of two parts:
%a lower bound for the entropy and an upper bound for the entropy.
%The lower bound is proved in Section~\ref{sec: lower}
%and the upper bound is proved in Section~\ref{sec: upper}.

The one dimensional case is not difficult.
%In fact, it may be rewarding to calculate the entropy of $R(n)$ in the case $d=1$.

\paragraph{Exercise.} In the case $d=1$,
there exist constants $c_1,C_1 >0$ such that for all $n \in \N$,
$$ c_1 \log n \leq H(R(n)) \leq C_1 \log n . $$

\paragraph{Acknowledgements.}  We thank Eric Shellef for useful discussions.  We also thank Elchanan Mossel for his
help with the construction in Section \ref{scn: extracting entropy}.

\section{Entropy of Random Walk}

\subsection{Entropy}
\label{sec: ent}

Here we provide some background on entropy.
Let $X$ be a random variable taking values in an arbitrary finite set $\Omega$.
For $x \in \Omega$, let $p(x)$ be the probability that $X = x$.
The \emph{entropy} of $X$ is defined as $H(X) = \E [ - \log p(X) ]$
(all logarithms in this note are base $2$).
For two random variables $X$ and $Y$,
%If $p(x,y)$ is the probability that $(X,Y) = (x,y)$ and $p(x|y) = p(x,y)/p(y)$, Then
the \emph{conditional entropy} of $X$ conditioned on $Y$ is defined as $H(X|Y) = H(X,Y) - H(Y)$.
%$$ H(X|Y) = \E [ - \log p(X|Y) ] = - \sum_{y \ : \ p(y) > 0 } p(y) \sum_{x \ : \ p(x|y) > 0 } p(x|y) \log p(x|y) . $$

\begin{prop} \label{prop: properties of entropy}
The following relations hold:
\begin{enumerate}
\item \label{item: H leq log} $H(X) \leq \log |\Omega|$.
%\item $H(X|Y) = H(X,Y) - H(Y)$.
\item \label{item: H(f(X)|X)} For every function $f$, $H(f(X)|X) = 0$.
%\item $H(X , f(X)) = H(X)$.
%\item $H(X) = H(f(X)) + H(X|f(X))$.
\item \label{item: chain cond. entropy} $H(X) \leq H(Y) + H(X|Y)$.
%\item $H(X|g(Y )) \geq H(X|Y )$.
%\item $H(X|Y ) = H(X, f(X, Y)|Y )$.
%\item $H(X|Y ) \geq H(X|Y,Z)$.
\end{enumerate}
\end{prop}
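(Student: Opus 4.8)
The plan is to treat the three statements separately, in each case unwinding the definition $H(X)=\E[-\log p(X)]=\sum_{x}p(x)\log\tfrac{1}{p(x)}$ under the standard convention $0\log 0=0$, so that only the support of $X$ (the atoms with $p(x)>0$) contributes to any sum. None of the three requires machinery beyond this definition together with one convexity inequality, so the work is mostly careful bookkeeping.

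For the first bound I would apply Jensen's inequality to the concave function $\log$. Writing $H(X)=\E\!\left[\log\tfrac{1}{p(X)}\right]$ and pushing the expectation inside gives $H(X)\le\log\E\!\left[\tfrac{1}{p(X)}\right]$. Now $\E\!\left[\tfrac{1}{p(X)}\right]=\sum_{x}p(x)\cdot\tfrac{1}{p(x)}$, where the sum ranges over the support, so it equals the number of atoms of $X$, which is at most $\abs{\Omega}$. Monotonicity of $\log$ then yields $H(X)\le\log\abs{\Omega}$.

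For the second statement the key observation is that $x\mapsto(f(x),x)$ is injective, so the pair $(f(X),X)$ is nothing but an injective relabeling of $X$; it therefore carries the same list of nonzero probabilities, and consequently $H(f(X),X)=H(X)$. Substituting into the definition $H(f(X)\mid X)=H(f(X),X)-H(X)$ gives exactly $0$. For the third statement I would first note that the definition of conditional entropy collapses the right-hand side, since $H(Y)+H(X\mid Y)=H(Y)+\bigl(H(X,Y)-H(Y)\bigr)=H(X,Y)$, so the claim is equivalent to the monotonicity $H(X)\le H(X,Y)$. To prove the latter directly I would use that the marginal obeys $p(x)=\sum_{y}p(x,y)\ge p(x,y)$ for every $y$, hence $\log\tfrac{1}{p(x,y)}\ge\log\tfrac{1}{p(x)}$; summing this against $p(x,y)$ and collapsing the $y$-sum via $\sum_{y}p(x,y)=p(x)$ gives $H(X,Y)\ge\sum_{x}p(x)\log\tfrac{1}{p(x)}=H(X)$.

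The only genuine care needed throughout is the treatment of zero-probability atoms, handled by restricting every sum to the relevant support and invoking the $0\log 0=0$ convention; in the second statement one must also confirm that the relabeling $x\mapsto(f(x),x)$ is injective so that the probability vectors of $(f(X),X)$ and of $X$ coincide. I expect the third statement to be the most substantive of the three, but even there the elementary monotonicity $p(x,y)\le p(x)$ makes the argument routine, so I anticipate no serious obstacle.
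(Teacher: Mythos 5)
Your proof is correct: Jensen's inequality for \ref{item: H leq log}, the injective relabeling $x \mapsto (f(x),x)$ for \ref{item: H(f(X)|X)}, and the reduction of \ref{item: chain cond. entropy} to the monotonicity $H(X) \leq H(X,Y)$ via $p(x,y) \leq p(x)$ are all sound, with the zero-probability atoms handled properly. The paper gives no proof of its own---it defers to \cite[Chapter 2]{InformationTheory}---and your three arguments are exactly the standard textbook ones, so the approaches coincide.
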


For more information on entropy and for proofs of these properties see,
e.g., \cite[Chapter 2]{InformationTheory}.

\subsection{Lower Bound}
\label{sec: lower}
\paragraph{Notation.}
%Where is that used?
%By $\F(n)$ we denote the $\sigma$-field generated by $S(0),\ldots,S(n)$.
By $\Pr_z$ and $\E_z$ we denote the probability measure and expectation of the random walk
conditioning on $S(0)=z$.  We denote $\Pr = \Pr_0$ and $\E = \E_0$.
Let $z,w \in \Z^d$ and $A \subset \Z^d$.
%By $|z|$ we denote the $L^1$ norm of $z$.
Denote by $\dist(z,w)$ the graph distance between $z$ and $w$ in $\Z^d$.
Denote $\dist(z,A) = \inf \set{ \dist(z,a) \ : \ a\in A }$.
We write $z \sim w$ if $\dist(z,w) = 1$, and $z \sim A$ if $\dist(z,A) = 1$.
The {\em inner boundary} of $A$ is defined as
$$ \p A = \set{ z \in A \ : \ z \sim \Z^d \setminus A } . $$
Let $p_n(A) = \Pr [ R(n) = A ]$.

\begin{lem} \label{lem: upper bound on p(A)}
For every  $A \subset \Z^d$,
$$ p_n(A) \leq \Big(1-\frac{1}{2d} \Big)^{|\p A|-1} . $$
\end{lem}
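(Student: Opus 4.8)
The plan is to exploit the fact that the event $\set{R(n) = A}$ is highly constrained: it forces the walk both to remain inside $A$ for all $n$ steps and to visit every vertex of the inner boundary $\p A$, since $\p A \subseteq A = R(n)$. The entire bound will come from the first constraint, evaluated at the moments the walk discovers new boundary vertices. The key elementary observation is that whenever the walk sits at a vertex $z \in \p A$, it has a neighbor outside $A$, so a single step keeps it inside $A$ with probability at most $1 - \frac{1}{2d}$, regardless of the past.

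To make this precise I would introduce stopping times recording the first visits to distinct boundary vertices. Writing $k = \abs{\p A}$, set $\sigma_1 = \inf\set{t \geq 0 : S(t) \in \p A}$ and, recursively, $\sigma_i = \inf\set{t > \sigma_{i-1} : S(t) \in \p A \setminus \set{S(\sigma_1), \ldots, S(\sigma_{i-1})}}$. On the event $\set{R(n) = A}$ all of $\p A$ is visited, so $\sigma_1 < \sigma_2 < \cdots < \sigma_k \leq n$ are finite; strict monotonicity forces $\sigma_i + 1 \leq n$ for every $i < k$, so each step $S(\sigma_i + 1)$ genuinely occurs within the first $n$ steps. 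Since remaining inside $A$ requires $S(\sigma_i + 1) \in A$, the event $\set{R(n) = A}$ is contained in $H = \set{\sigma_k \leq n} \cap \bigcap_{i=1}^{k-1} \set{S(\sigma_i + 1) \in A}$, and it suffices to bound $\Pr[H]$.

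The core estimate is the strong Markov property: conditioned on $\F_{\sigma_i}$, the increment $S(\sigma_i + 1)$ is uniform over the $2d$ neighbors of $S(\sigma_i) \in \p A$, at least one of which lies outside $A$, so $\Pr[S(\sigma_i + 1) \in A \mid \F_{\sigma_i}] \leq 1 - \frac{1}{2d}$ on $\set{\sigma_i < \infty}$. I would then peel these factors off one at a time using the tower property, conditioning on $\F_{\sigma_{k-1}}, \F_{\sigma_{k-2}}, \ldots$ in turn. The crucial bookkeeping point is that the indicator of $\set{\sigma_i < \infty} \cap \bigcap_{j < i} \set{S(\sigma_j + 1) \in A}$ is $\F_{\sigma_i}$-measurable, because $\sigma_j + 1 \leq \sigma_i$ for $j < i$; hence each conditioning legitimately extracts one factor of $1 - \frac{1}{2d}$. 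After $k-1$ iterations this yields $\Pr[H] \leq \sr{1 - \frac{1}{2d}}^{k-1}$, which is the claim.

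The argument is essentially bookkeeping once the right stopping times are in place, and I expect the only genuinely delicate point to be the off-by-one in the exponent: the last boundary vertex may be discovered exactly at time $n$, leaving no subsequent step to constrain, which is precisely why one obtains $\abs{\p A} - 1$ rather than $\abs{\p A}$. The edge cases are harmless: $\abs{\p A} \leq 1$ makes the bound trivial, and since $R(n)$ is finite and nonempty, $A$ is a finite nonempty set and therefore has $\p A \neq \emptyset$, so the exponent is nonnegative.
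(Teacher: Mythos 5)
Your proof is correct and takes essentially the same approach as the paper's: stopping times marking boundary visits, the strong Markov property giving an escape probability of at least $\frac{1}{2d}$ at each visit to $\p A$, and iterated conditioning peeling off $|\p A|-1$ factors of $1-\frac{1}{2d}$. The only cosmetic difference is that you use first-visit times to \emph{distinct} vertices of $\p A$, whereas the paper uses successive visit times $T_j \geq T_{j-1}+1$ to $\p A$ (allowing repeated vertices) and observes that $R(n)=A$ forces at least $|\p A|$ such visits; both versions handle the off-by-one in the exponent in the same way.
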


\begin{proof}
Let $T_0 = 0$ and define inductively for $j \geq 1$,
$$ T_j = \inf \set{ t \geq T_{j-1} + 1 \ : \ S(t) \in \p A } . $$
By the strong Markov property, for any $j < |\p A|$,
$$ \Pr \Br{ S(T_j+1) \not\in A }{ S(0),\ldots,S(T_j), T_j < \infty } \geq \frac{1}{2d} . $$
%So,
%$$ \Pr \Br{ E_j }{ E_1 , \ldots, E_{j-1} } \leq 1 - \frac{1}{2d} . $$
The event $A \subseteq R(n)$ implies that $T_j \leq n$ for all $j \leq |\p A|$.
The event $R(n) \subseteq A$ implies that $S(T_j+1) \in A$ for all $j \leq |\p A|-1$.
Let $E_j$ be the event that $S(T_j+1) \in A$ and $T_{j+1} < \infty$. Thus,
\begin{align*}
  \Pr \br{ R(n) = A } & \leq \Pr \Big[ \bigcap_{j=1}^{|\p A|-1} E_j \Big] \\
  & \leq \prod_{j=1}^{|\p A|-1} \Pr \Br{ E_j }{ E_1, \ldots, E_{j-1} } \leq
 \Big(1-\frac{1}{2d} \Big)^{|\p A|-1}  .
  \qquad \qedhere
\end{align*}
\end{proof}

Lemma \ref{lem: upper bound on p(A)} shows that in order to lower bound the entropy
of the random walk trace it is enough to lower bound the expected value of the size of the inner boundary
of the random walk trace.

\begin{cor} \label{cor: lower bound H(R(n))}
$H(R(n)) \geq - \log \Big(1- \frac{1}{2d} \Big) \cdot \E [ |\p R(n)|-1 ]$.
\end{cor}

The following lemma gives the lower bound for the entropy of the random walk trace.

\begin{lem} \label{lem: entropy lower bound}
For any $d \geq 2$, there exists a constant $c_d>0$ such that for all $n \in \N$,
$$ H(R(n)) \geq \left\{ \begin{array}{lr}
    c_2 \frac{n}{\log^2(n)} & d = 2 , \\
    c_d n & d \geq 3 .
\end{array} \right. $$
\end{lem}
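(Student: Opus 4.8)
By Corollary~\ref{cor: lower bound H(R(n))}, it suffices to show that $\E[|\p R(n)|]$ is of the claimed order, namely of order $n/\log^2 n$ when $d=2$ and of order $n$ when $d \geq 3$. Since $|\p R(n)| \geq 1$ always, the $-1$ term is harmless, so the plan is to lower bound $\E[|\p R(n)|]$ by a constant times the target quantity.

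The approach I would take is to write $|\p R(n)| = \sum_{t=0}^{n} \1{S(t) \in \p R(n)}$ and hence, by linearity of expectation, to estimate the probability that the walk is at a boundary point of its own range. However, a cleaner route is to produce a large collection of \emph{disjoint} indicator events, each contributing a boundary point, so that the expectation adds up. Concretely, I would declare a time $t$ to be a \emph{good} time if the position $S(t)$ lies in $\p R(n)$ and, to guarantee that distinct good times yield distinct boundary vertices, I would refine this to times at which the walk visits a point that it then never revisits and which has a neighbor outside the range. The key probabilistic input is that, for a walk of length $n$, a constant fraction of the steps create a point that is \emph{not} revisited in the remaining $n-t$ steps and that remains on the boundary. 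In $d \geq 3$ transience makes this immediate: from any point, with probability bounded below the walk escapes to infinity (or at least leaves a fixed neighborhood forever), so a positive-density subset of times produces boundary points, giving $\E[|\p R(n)|] \geq c_d\, n$. In $d = 2$ the walk is recurrent, so the same event has probability only of order $1/\log n$ over a window of the relevant length, and carefully accounting for the two-dimensional escape probabilities is what produces the $1/\log^2 n$ factor rather than $1/\log n$.

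More precisely, for $d=2$ I would split the walk into consecutive blocks of length $\ell$ and use the standard fact that the probability a planar walk started at a fixed point does not return to that point within $\ell$ steps is of order $1/\log \ell$. Choosing $\ell$ comparable to $n$ (or a suitable power), I expect each block to contribute, in expectation, one fresh boundary point with probability of order $1/\log n$, and the squaring of the logarithm arises because one needs \emph{both} that the point is a local maximum of the visit-order (not revisited) \emph{and} that a neighboring site is never visited, two events each carrying a logarithmic cost. Summing over the order-$n$ candidate times then yields $\E[|\p R(n)|] \geq c_2\, n/\log^2 n$.

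The main obstacle will be the disjointness/double-counting bookkeeping: the naive sum $\sum_t \Pr[S(t)\in \p R(n)]$ overcounts, because a single boundary vertex can be the position of the walk at several times, so one must either divide by the number of visits or, preferably, restrict to last-visit times to make the map from good times to boundary vertices injective. Controlling the number of visits to a typical boundary point, and in $d=2$ getting the logarithmic factors to combine correctly rather than cancel, is the delicate part. The transient case $d \geq 3$ should be comparatively routine once the last-visit device is in place, since the escape probability is bounded below uniformly.
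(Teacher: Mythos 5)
Your proposal is correct in substance and, once unpacked, is essentially the paper's argument in time-indexed form; the difference is in the bookkeeping, and the paper's choice dissolves exactly the obstacle you flag as delicate. The paper sums over \emph{sites} rather than times: for each $z$ and a fixed neighbor $w \sim z$ it bounds $\Pr \br{ z \in \p R(n) } \geq \Pr \br{ T_z \leq n , \ T_w > n } \geq f_d(n) \Pr \br{ T_z \leq n }$ via the strong Markov property at the \emph{first} hitting time $T_z$, where $f_2(n)$ is of order $1/\log n$ (Lemma~19.1 of \cite{Revesz}) and $f_d(n)$ is a constant for $d \geq 3$ by transience; summing over $z$ gives $\E \br{ |\p R(n)| } \geq f_d(n) \E \br{ |R(n)| }$, and the second logarithm in $d=2$ comes from quoting $\E \br{ |R(n)| } \geq c\, n/\log n$ (Theorem~20.1 of \cite{Revesz}) rather than from any ``not revisited'' event. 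Since distinct sites are automatically distinct, no injectivity or last-visit device is ever needed --- your double-counting worry simply does not arise in this factorization. Your time-indexed route is viable too (indeed the range-size estimate is standardly proved by exactly your last-visit decomposition, so the two approaches coincide after unpacking), but your sketch omits one genuine point: for $S(t)$ to lie in $\p R(n)$ the neighbor must be unvisited on all of $[0,n]$, including \emph{before} time $t$. This is fixable by time reversal --- the pre-$t$ path reversed is an independent simple random walk from $S(t)$, so you need the forward walk to avoid the two-point set $\set{S(t), w}$ (still of order $1/\log n$, since the set is finite and at bounded distance) and the backward walk to avoid $w$ (another factor $1/\log n$) --- so your $1/\log^2 n$ per-time bound survives, though the attribution of the two logarithmic costs has to be adjusted accordingly. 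In short: same engine (escape probabilities times the number of fresh points), with the paper's spatial summation buying a cleaner proof at the price of citing the range-size estimate, and your version buying self-containedness at the price of the reversal argument and the injectivity bookkeeping.
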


\begin{proof}
By Corollary \ref{cor: lower bound H(R(n))}, it suffices to show that
$$ \E [|\p R(n)| ] \geq \left\{ \begin{array}{lr}
    c_2 \frac{n}{\log^2(n)} & d = 2 , \\
    c_d n & d \geq 3 ,
\end{array} \right. $$
for some constants $c_d > 0$.
For $z \in \Z^d$, define
$T_z = \inf \set{ t \geq 0 \ : \ S(t) = z }$.
%where $\inf \emptyset = \infty$.
% $\Pr [ T_z < T_w \ , \ T_z \leq n ] \geq c_d \Pr [ T_z
%\leq n ]$.  Furthermore,
By Lemma~19.1 of \cite{Revesz}, and by the transience of the random walk for $d \geq 3$,
there exist constants $c_d > 0$ such that for any $z \sim w \in \Z^d$,
%these constants can be chosen small enough so that
$$ \Pr_z [ T_w > n ] \geq \left\{ \begin{array}{lr}
    \frac{c_2}{\log n} & d = 2, \\
     c_d & d \geq 3 .
\end{array} \right. $$
Denote the right-hand side of the above equality by $f_d(n)$.
Using the strong Markov property at time $T_z$, for any $z \sim w \in \Z^d$,
$$ \Pr \br{ z \in \p R(n) } \geq
\Pr \br{ T_z \leq n \ , \ T_w > n } \geq
f_d(n) \Pr \br{ T_z \leq n } . $$
This proves the lemma, since
$$ \E [ |\p R(n) | ] \geq f_d(n) \sum_{z \in \Z^d} \Pr [ T_z \leq n ] = f_d(n) \E [ |R(n)| ] , $$
and since
$$ \E [ |R(n)| ] \geq \left\{ \begin{array}{lr}
    c'_2 \cdot \frac{n}{\log n} & d = 2 , \\
    c'_d n & d \geq 3 ,
\end{array} \right. $$
for some constants $c'_d > 0$ (see, e.g., Theorem 20.1 in \cite{Revesz}).
\end{proof}

\subsection{Upper Bound}
\label{sec: upper}

We now show that the lower bounds on the entropy of the random walk trace
given by Lemma \ref{lem: entropy lower bound} are correct up to a constant.
The transient case is much simpler than the two-dimensional case.

\begin{prop}
For $d \geq 3$, there exists a constant $C_d > 0$ such that for all $n \in \N$,
$$ H(R(n)) \leq C_d \cdot n . $$
\end{prop}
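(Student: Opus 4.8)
The plan is to observe that in the transient regime no real work is needed: the trivial entropy bound is already of the right order, because the range is a deterministic function of the walk path and a path of length $n$ carries only $O(n)$ bits of information. This is exactly why $d \ge 3$ is easier than $d = 2$: for $d = 2$ the truth is smaller than this crude bound by a factor of $\log^2 n$, which forces the renormalization argument, whereas here the crude upper bound and the lower bound of Lemma~\ref{lem: entropy lower bound} already match up to a constant.

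Concretely, first I would note that $R(n)$ is a deterministic function of the path $(S(0),\ldots,S(n))$. Applying Proposition~\ref{prop: properties of entropy}(\ref{item: chain cond. entropy}) with $X = R(n)$ and $Y = (S(0),\ldots,S(n))$ gives $H(R(n)) \le H(S(0),\ldots,S(n)) + H(R(n) \mid S(0),\ldots,S(n))$, and the conditional term vanishes by Proposition~\ref{prop: properties of entropy}(\ref{item: H(f(X)|X)}). Hence $H(R(n)) \le H(S(0),\ldots,S(n))$.

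Second, I would bound the entropy of the path by the size of its support. Since $S(0) = 0$ and each increment $S(t) - S(t-1)$ takes one of $2d$ values, the number of nearest-neighbor paths of length $n$ starting at the origin is exactly $(2d)^n$. Thus the vector $(S(0),\ldots,S(n))$ takes values in a set of cardinality $(2d)^n$, and Proposition~\ref{prop: properties of entropy}(\ref{item: H leq log}) yields $H(S(0),\ldots,S(n)) \le \log((2d)^n) = n \log(2d)$. Combining the two bounds gives $H(R(n)) \le n \log(2d)$, i.e.\ the claim with $C_d = \log(2d)$.

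There is essentially no obstacle here; the only point to watch is to route both reductions through the three stated entropy properties rather than through subadditivity of entropy (which is not among the listed facts). The argument in fact works verbatim for every $d \ge 1$, and its weakness is instructive: it completely ignores the heavy self-intersections of the walk, which is precisely the effect that must be exploited, via renormalization, to obtain the sharper $n/\log^2 n$ upper bound in the recurrent case $d = 2$.
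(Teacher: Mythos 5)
Your proof is correct and is essentially the paper's argument: both rest on the fact that there are at most $(2d)^n$ nearest-neighbor paths of length $n$, the paper applying $H(X) \leq \log|\Omega|$ directly to the support of $R(n)$, while you route the same count through the path entropy via clauses \ref{item: H(f(X)|X)} and \ref{item: chain cond. entropy}. The detour is harmless and yields the same constant $C_d = \log(2d)$.
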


\begin{proof}
Let $\Omega = \set{ A \subset \Z^d \ : \ p_n(A) > 0 }$.
By clause \ref{item: H leq log} of
Proposition \ref{prop: properties of entropy}
it suffices to prove that $|\Omega| \leq  (2d)^n$.
This follows from the fact that the number of
possible $n$-step trajectories in $\Z^d$ starting at $0$ is $(2d)^n$.
%Let $\set{e_1,e_2,\ldots,e_{2d}}$ be the set of
%all unit vectors in $\Z^d$; i.e. the set $\set{ z \in \Z^d \ : \ z \sim 0}$.
%Define $f:\set{1,2,\ldots,2d}^n \to \Omega$ by
%$$ f(v_1,\ldots,v_n) = \Set{ 0 \ , \ \sum_{j=1}^k e_{v_j} }{
%k = 1,2, \ldots, n } . $$
%Then, it is easily seen that $f$ is onto $\Omega$, so $|\Omega| \leq (2d)^n$.
\end{proof}

\subsection{Two Dimensions}

We now turn to the two-dimensional case, which is more elaborate.

For $z \in \Z^2$, we denote by $\norm{z}$ the $L^2$-norm of $z$.
Denote $$ T_{z,r} = \inf \set{ t \geq 0 \ : \ \norm{S(t)-z} \leq r } , $$
and denote $T_r = T_{\vec{0},r}$.
Also denote
$$ \tau_{z,r} = \inf \set{ t \geq 0 \ : \ \norm{S(t)-z} \geq r } , $$
and denote $\tau_r = \tau_{\vec{0},r}$.

\subsubsection{Probability Estimates}

We begin with some classical probability estimates regarding the random walk on $\Z^2$,
which we include for the sake of completeness.

\begin{lem} \label{lem: maximum of E[RW]}
There exists a constant $C>0$ such that for all $n \in \N$,
$$ \E \Big[ \max_{0 \leq k \leq n} \norm{S(k)}^2 \Big]  \leq C n  . $$
\end{lem}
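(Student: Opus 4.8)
The plan is to reduce this two-dimensional maximal bound to a one-dimensional martingale estimate and then invoke Doob's $L^2$ maximal inequality coordinate by coordinate. Write $S(k) = (S_1(k),S_2(k))$ for the two coordinate processes. Since $\norm{S(k)}^2 = S_1(k)^2 + S_2(k)^2$, the maximum is subadditive:
$$ \max_{0 \leq k \leq n} \norm{S(k)}^2 \; \leq \; \max_{0 \leq k \leq n} S_1(k)^2 \; + \; \max_{0 \leq k \leq n} S_2(k)^2 . $$
By linearity of expectation it therefore suffices to bound $\E\big[\max_{0 \leq k \leq n} S_i(k)^2\big]$ for each $i \in \set{1,2}$.

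Next I would verify that each coordinate process $(S_i(k))_{k \geq 0}$ is a martingale with respect to the natural filtration. At each step the walk moves to one of the $2d = 4$ neighbors with equal probability, so the increment of $S_i$ equals $+1$ or $-1$, each with probability $1/4$, and equals $0$ with probability $1/2$. In particular its conditional mean is zero, so $S_i(k)$ is a martingale, and its increments are uncorrelated with common variance $\tfrac12$. Hence $\E[S_i(n)^2] = \Var(S_i(n)) = n/2$.

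Finally, since $S_i(k)$ is a martingale, $\abs{S_i(k)}$ is a nonnegative submartingale, and Doob's $L^2$ maximal inequality gives
$$ \E\Big[ \max_{0 \leq k \leq n} S_i(k)^2 \Big] \; \leq \; 4\, \E\big[ S_i(n)^2 \big] \; = \; 2n . $$
Summing the two coordinate contributions yields $\E\big[\max_{0 \leq k \leq n} \norm{S(k)}^2\big] \leq 4n$, which is the assertion with $C = 4$.

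I do not expect a genuine obstacle here; the estimate is essentially a packaging of Doob's inequality. The only points needing a little care are the two bookkeeping facts that the walk is \emph{lazy in each single coordinate} (half the steps leave a given coordinate fixed, which is what produces the per-step variance $\tfrac12$ rather than $1$), and that Doob's $L^2$ inequality controls $\E\big[(\max_k \abs{S_i(k)})^2\big] = \E\big[\max_k S_i(k)^2\big]$ by $4\,\E[S_i(n)^2]$ rather than by $\E[S_i(n)^2]$ itself, so the final constant carries the factor $4$.
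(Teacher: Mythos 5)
Your proposal is correct and follows essentially the same route as the paper: decompose $\norm{S(k)}^2$ into coordinates, apply Doob's $L^2$ maximal inequality to each coordinate martingale, and use the per-step coordinate variance $\tfrac12$ to get $\E[S_i(n)^2]=n/2$ (the paper extracts this last fact from the martingale $|X(k)|^2 - k/2$, which is equivalent to your uncorrelated-increments computation). Both arguments yield the same constant $C=4$, and your remark about the coordinate-wise laziness of the walk matches the paper's implicit use of variance $\tfrac12$ per step.
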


\begin{proof}
Let $S(k) = (X(k),Y(k))$,
%be the coordinates of $S(k)$
so $\norm{S(k)}^2 = |X(k)|^2 + |Y(k)|^2$.
%Since $X(k)$ and $Y(k)$ have the same distribution, we have
%$$ \Pr \br{ \max_{0 \leq k \leq n} \norm{S(k)}^2 \geq \ell } \leq
%2 \cdot \Pr \br{ \max_{0 \leq k \leq n} |X(k)|^2 \geq \ell/2 } . $$
%Thus, summing over $\ell$, we get that
%$$ \E \br{ \max_{0 \leq k \leq n} \norm{S(k)}^2 }
%\leq 4 \E \br{ \max_{0 \leq k \leq n} |X(k)|^2 } . $$
Doob's maximal inequality (see, e.g., \cite[Chapter II]{RY}) on the martingale $X(k)$ tells us that
$$ \E \Big[  \max_{0 \leq k \leq n} |X(k)|^2 \Big] \leq  4 \E \br{ |X(n)|^2 } . $$
The martingale $|X(k)|^2 - k/2$ tells us that $\E \br{ |X(n)|^2 } = n/2$, which completes the proof,
since $X(k)$ and $Y(k)$ have the same distribution.
\end{proof}

\begin{lem} \label{lem: maximum of RW}
There exist constants $c_1, c_2 > 0$ such that for all $n \in \N$ and $\lambda > 0$,
\begin{align*}
\Pr \Big[ \max_{1 \leq j \leq n} \norm{S(j)} \geq \lambda \Big]
& \leq c_1 \cdot \exp \sr{ - c_2 \frac{\lambda^2}{n} } .
\end{align*}
\end{lem}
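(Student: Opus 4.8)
The plan is to reduce the two-dimensional estimate to a one-dimensional Chernoff-type bound for each coordinate separately. Writing $S(k) = (X(k),Y(k))$ as in Lemma~\ref{lem: maximum of E[RW]}, the event $\max_{1 \leq j \leq n} \norm{S(j)} \geq \lambda$ forces $\norm{S(j)} \geq \lambda$ for some $j$, and for that $j$ at least one of $\abs{X(j)}$, $\abs{Y(j)}$ must be at least $\lambda/\sqrt{2}$ (otherwise $\norm{S(j)}^2 < \lambda^2$). Hence by a union bound
$$ \Pr \Big[ \max_{1 \leq j \leq n} \norm{S(j)} \geq \lambda \Big] \leq \Pr \Big[ \max_{0 \leq k \leq n} \abs{X(k)} \geq \tfrac{\lambda}{\sqrt 2} \Big] + \Pr \Big[ \max_{0 \leq k \leq n} \abs{Y(k)} \geq \tfrac{\lambda}{\sqrt 2} \Big] , $$
and since $X$ and $Y$ have the same law it suffices to bound a single term; the dependence between the two coordinates is irrelevant for this step.

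First I would control one coordinate by the exponential martingale method. The increment $X(k+1)-X(k)$ equals $0$ with probability $1/2$ and $\pm 1$ each with probability $1/4$, because at each step only one of the two coordinates moves. A direct computation of the moment generating function, together with the elementary inequality $\cosh \theta \leq \exp(\theta^2/2)$, then gives $\E \br{ e^{\theta (X(k+1)-X(k))} } \leq \exp(\theta^2/2)$ for every $\theta$. Consequently $e^{\theta X(k)}$ is a nonnegative submartingale with $\E \br{ e^{\theta X(n)} } \leq \exp(\theta^2 n /2)$.

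Next I would apply Doob's maximal inequality to this submartingale (for $\theta > 0$) to obtain, for any $t > 0$,
$$ \Pr\Big[ \max_{0 \leq k \leq n} X(k) \geq t \Big] = \Pr\Big[ \max_{0 \leq k \leq n} e^{\theta X(k)} \geq e^{\theta t} \Big] \leq e^{-\theta t}\, \E \br{ e^{\theta X(n)} } \leq \exp\sr{ \tfrac{\theta^2 n}{2} - \theta t } . $$
Optimizing over $\theta$ by choosing $\theta = t/n$ yields $\Pr[\max_k X(k) \geq t] \leq \exp(-t^2/(2n))$, and the same bound holds for $\max_k (-X(k))$ by symmetry, so $\Pr[\max_k \abs{X(k)} \geq t] \leq 2\exp(-t^2/(2n))$. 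Substituting $t = \lambda/\sqrt{2}$ into the union bound above then gives
$$ \Pr \Big[ \max_{1 \leq j \leq n} \norm{S(j)} \geq \lambda \Big] \leq 4 \exp\sr{ -\frac{\lambda^2}{4n} } , $$
which is the claimed estimate with $c_1 = 4$ and $c_2 = 1/4$.

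I do not expect a serious obstacle here, since the argument is entirely standard: a Chernoff bound promoted to a bound on the running maximum via Doob's maximal inequality. The only point requiring slight care is the moment generating function estimate, where one must account for the fact that each coordinate remains fixed with probability $1/2$ rather than treating the increments as $\pm 1$ Rademacher variables; this affects only the numerical constants and not the form of the bound.
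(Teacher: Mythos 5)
Your proof is correct. The paper itself offers no argument here: it disposes of the lemma in one line by citing Theorem~2.13 of R\'ev\'esz \cite{Revesz}, a maximal inequality for simple random walk (classically obtained via the reflection principle). Your route is therefore genuinely different in being self-contained. You decompose $\norm{S(j)}^2 = X(j)^2 + Y(j)^2$, note that $\norm{S(j)} \geq \lambda$ forces one coordinate to reach $\lambda/\sqrt{2}$, and reduce to a single coordinate by a union bound (correctly observing that the dependence between $X$ and $Y$ is irrelevant at this step); you then run the standard Chernoff--Doob argument. All the details check out: the lazy increments of $X$ (equal to $0$ with probability $1/2$ and $\pm 1$ with probability $1/4$ each) are i.i.d.\ with $\E \br{ e^{\theta \Delta X} } = \tfrac{1}{2} + \tfrac{1}{2}\cosh\theta \leq \cosh\theta \leq e^{\theta^2/2}$, so $e^{\theta X(k)}$ is a nonnegative submartingale by Jensen, Doob's maximal inequality gives $\Pr \big[ \max_{0 \leq k \leq n} X(k) \geq t \big] \leq \exp\sr{\theta^2 n/2 - \theta t}$, and optimizing with $\theta = t/n$ yields $\exp(-t^2/(2n))$; symmetry and the substitution $t = \lambda/\sqrt{2}$ give the stated bound with $c_1 = 4$, $c_2 = 1/4$, valid for every $\lambda > 0$ with no case analysis since the Chernoff computation holds for all $t > 0$. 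You also flag the one point where carelessness could creep in --- treating the coordinate increments as Rademacher rather than lazy --- and handle it properly. What the citation buys the paper is brevity; what your argument buys is a complete proof from first principles, using only tools (Doob's inequality, coordinate martingales) already invoked in the neighboring Lemma~\ref{lem: maximum of E[RW]}, at the cost of a few extra lines.
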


\begin{proof}
This is a consequence of Theorem 2.13 in \cite{Revesz}.
\end{proof}

\begin{lem} \label{lem: from m to k}
There exists a constant $c>0$ such that the following holds.
Let $T = T_{\vec{0},0}$.  Then, for $z \in \Z^2$ and $r \geq 2 \norm{z}$,
$$ \Pr_z \br{ T \leq \tau_r } \geq \frac{c \log (r/\norm{z})}{\log r} . $$
\end{lem}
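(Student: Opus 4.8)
The plan is to run the standard potential-theoretic argument for the two-dimensional walk. Let $a:\Z^2\to[0,\infty)$ be the \emph{potential kernel}: the unique function with $a(0)=0$ that is harmonic off the origin, i.e.\ $\frac14\sum_{y\sim x}a(y)=a(x)$ for every $x\ne0$, and which satisfies the classical asymptotics $a(x)=\frac{2}{\pi}\ln\norm{x}+\kappa+O(\norm{x}^{-2})$ for an explicit constant $\kappa$ (here $\ln$ is the natural logarithm; since the final bound is a ratio of logarithms its base is irrelevant). The harmonicity off $0$ makes $a(S(t\wedge T))$ a martingale, and up to the stopping time $\sigma=T\wedge\tau_r$ the walk stays in the ball of radius $r+1$, so $a(S(t\wedge\sigma))$ is a \emph{bounded} martingale and $\sigma\le\tau_r<\infty$ almost surely. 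I assume throughout that $z\ne0$, the case $z=0$ being trivial. The starting point is the optional-stopping identity $a(z)=\E_z[a(S(\sigma))]$.

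Next I would split this expectation by whether the walk hits $0$ first or leaves the ball first. On $\{T\le\tau_r\}$ one has $S(\sigma)=0$, contributing $a(0)=0$, so
$$a(z)=\E_z\br{a(S(\tau_r))\,;\,\tau_r<T}\ge\Big(\min_{\norm{x}\ge r}a(x)\Big)\Pr_z[\tau_r<T].$$
Writing $A(r):=\min_{\norm{x}\ge r}a(x)$, the asymptotics give a fixed constant $C_1$ with $A(r)\ge\frac2\pi\ln r+\kappa-C_1$ and $a(z)\le\frac2\pi\ln\norm{z}+\kappa+C_1$, whence
$$\Pr_z[T\le\tau_r]=1-\Pr_z[\tau_r<T]\ge 1-\frac{a(z)}{A(r)}\ge\frac{\tfrac2\pi\ln(r/\norm{z})-2C_1}{\tfrac2\pi\ln r+\kappa-C_1}.$$
If $r\ge K\norm{z}$ for a large enough absolute constant $K$ (chosen so that $\ln K\ge 2\pi C_1$), then $2C_1\le\frac1\pi\ln(r/\norm{z})$, so the numerator is at least $\frac1\pi\ln(r/\norm{z})$, while the denominator is $O(\ln r)$ for $r\ge2$. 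This already yields $\Pr_z[T\le\tau_r]\ge c\,\ln(r/\norm{z})/\ln r$ in the large-ratio regime.

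The main obstacle is the boundary regime $2\norm{z}\le r<K\norm{z}$, where $\ln(r/\norm{z})=O(1)$ and the fixed additive error $2C_1$ can swamp the main term, so the display above need not even be positive. Here it suffices to prove the weaker bound $\Pr_z[T\le\tau_r]\ge c'/\ln r$, since in this range the target right-hand side is itself $O(1/\ln r)$. To obtain it I would apply the strong Markov property at the first time the walk reaches the ball of radius $r/K$: since all radii involved ($r/K$, $\norm{z}$, $r$) differ by bounded factors, this is a constant-aspect-ratio annulus crossing, and by scale invariance (made rigorous via the invariance principle, using estimates of the type in Lemma~\ref{lem: maximum of RW}) the walk reaches radius $r/K$ before $\tau_r$ with probability at least an absolute $p_0>0$; and from any point $w$ with $\norm{w}\le r/K$ one has $r\ge K\norm{w}$, so the already-established large-ratio bound gives $\Pr_w[T\le\tau_r]\ge c\,\ln K/\ln r$. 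Combining the two factors closes the bootstrap. (In fact the numerical value of the error constant $C_1$ is small enough that the direct computation of the middle paragraph already covers all $r\ge2\norm{z}$; I would include the bootstrap so as to keep the argument independent of such numerics.)
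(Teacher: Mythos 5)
Your proof is correct and takes essentially the same route as the paper's: optional stopping of the potential kernel $a$ at $T \wedge \tau_r$, yielding $\Pr_z \br{ T \leq \tau_r } \geq 1 - a(z)/\min_{\norm{x} \geq r} a(x)$, and then the asymptotics $a(x) = c_1 \log \norm{x} + c_2 + O(\norm{x}^{-2})$ with the additive constant cancelling in the numerator. The paper stops at that one display and leaves the bounded-ratio/small-$\norm{z}$ regime implicit, so your extra annulus-crossing bootstrap for $2\norm{z} \leq r < K\norm{z}$ (sketchy in its appeal to the invariance principle, but a standard fact, and replaceable by a direct-path argument for bounded $\norm{z}$) is a legitimate, slightly more careful treatment of a detail the paper glosses over rather than a different approach.
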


\begin{proof}
Let $a:\Z^2 \to [0,\infty)$ be the \emph{potential kernel}
defined in Chapter 1.6 of \cite{IntersRW}.  That is, $a(0) = 0$, $a(\cdot)$ is harmonic in
$\Z^2 \setminus \set{0}$, and there exist constants $c_1,c_2 > 0$ such that
for any $z \in \Z^2 \setminus \set{0}$,
$a(z) = c_1 \log \norm{z} + c_2 + O(\norm{z}^{-2})$.
Since $a(\cdot)$ is harmonic in $\Z^2 \setminus \set{0}$,
if $r > \norm{z}$ then
$a(S(t))$ is a martingale up to time $T' = \min \set{T,\tau_r}$.  Thus,
$$ a(z) = \sr{ 1 - \Pr_z \br{ T \leq \tau_r } } \cdot \E_z \Br{ a(S(T')) }{ T > \tau_r } , $$
which implies
\[ \Pr_z \br{ T \leq \tau_r } \geq 1 - \frac{c_1 \log \norm{z} + c_2 + O(\norm{z}^{-2}) }{
c_1 \log r + c_2 + O(r^{-2}) } .\qedhere \]
\end{proof}

We also need an upper bound,

\begin{lem} \label{lem: from z to hit r before R}
There exists a constant $C > 0$ such that for every $z \in \Z^2$ and $r,R$
such that $1 \leq r \leq \frac{1}{2} \norm{z} \leq \frac{1}{4} R$,
$$ \Pr_z \br{ T_r \leq \tau_R } \leq C \cdot \frac{\log (R/ \norm{z})}{\log (R/r)} . $$
\end{lem}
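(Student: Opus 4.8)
The plan is to mirror the proof of Lemma~\ref{lem: from m to k}, again using the potential kernel $a(\cdot)$ and optional stopping, but now stopping the walk when it reaches the \emph{annulus} geometry described by the three radii $r < \norm{z} < R$. The key structural fact is the two-sided estimate $a(z) = c_1 \log \norm{z} + c_2 + O(\norm{z}^{-2})$, which lets us convert exit-probability questions on concentric balls into nearly-linear relations between logarithms of radii.

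First I would set $T' = \min\set{T_r, \tau_R}$ and observe that, since $a(\cdot)$ is harmonic away from the origin and the walk started at $z$ with $\norm{z} > r \geq 1$ stays outside $\set{0}$ up to time $T'$, the process $a(S(t \wedge T'))$ is a bounded martingale. Optional stopping then yields
\[
 a(z) = \Pr_z\br{ T_r \leq \tau_R } \cdot \E_z\Br{ a(S(T_r)) }{ T_r \leq \tau_R } + \sr{ 1 - \Pr_z\br{ T_r \leq \tau_R } } \cdot \E_z\Br{ a(S(\tau_R)) }{ T_r > \tau_R }.
\]
The goal is to solve this for $\Pr_z\br{ T_r \leq \tau_R }$. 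To do so I would estimate the two conditional expectations on the right using the asymptotics of $a$: on the event $T_r \leq \tau_R$ the walk sits at radius roughly $r$, so $a(S(T_r)) \approx c_1 \log r + c_2$, while on the complementary event it sits at radius roughly $R$, so $a(S(\tau_R)) \approx c_1 \log R + c_2$. The conditions $r \geq 1$ and $R \geq 4 r$ guarantee the error terms $O(r^{-2})$ and $O(R^{-2})$ are controlled and that the two values $\log r$ and $\log R$ are genuinely separated.

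Rearranging the optional-stopping identity gives, up to the additive constant $c_2$ and the $O(\cdot)$ corrections,
\[
 \Pr_z\br{ T_r \leq \tau_R } \approx \frac{c_1 \log R + c_2 - a(z)}{c_1 \log R - c_1 \log r} = \frac{c_1\log(R/\norm{z}) + O(\norm{z}^{-2}) + O(R^{-2})}{c_1 \log(R/r)},
\]
and bounding the numerator from above by a constant times $\log(R/\norm{z})$ (absorbing the bounded error terms using $r \leq \tfrac12\norm{z} \leq \tfrac14 R$) yields exactly the claimed bound $\Pr_z\br{ T_r \leq \tau_R } \leq C \cdot \log(R/\norm{z}) / \log(R/r)$.

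The main obstacle is the careful handling of the \emph{overshoot} at the stopping times, together with the additive constant $c_2$ and the error terms. Because $a$ is only logarithmically growing, I must verify that replacing $\norm{S(T_r)}$ by $r$ and $\norm{S(\tau_R)}$ by $R$ introduces only $O(1)$ multiplicative distortion inside the logarithms — i.e. that the overshoot keeps these norms comparable to $r$ and $R$ respectively — and that the leftover constant and $O(\norm{z}^{-2}), O(R^{-2})$ terms can all be subsumed into the constant $C$ without spoiling the inequality in the relevant regime $r \leq \tfrac12 \norm{z} \leq \tfrac14 R$. The spacing hypotheses on $r, \norm{z}, R$ are precisely what make these absorptions legitimate, so the estimate holds with a single universal constant $C$.
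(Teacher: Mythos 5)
Your proposal is correct and follows essentially the same route as the paper: the paper's proof also applies the potential kernel $a(\cdot)$ with optional stopping at $\min\set{T_r,\tau_R}$, solves the resulting identity for $\Pr_z\br{T_r \leq \tau_R}$, and absorbs the overshoot and $O(\cdot)$ error terms into the constant using the separation $r \leq \tfrac12\norm{z} \leq \tfrac14 R$. The intermediate bound you derive is exactly the displayed inequality in the paper's proof.
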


\begin{proof}
Using the potential kernel from the proof of Lemma \ref{lem: from m to k}
with the stopping time $\min \set{ T_r , \tau_R }$,
there exists a constant $c_1 > 0$ such that
\begin{align*}
\Pr_z \br{ T_r \leq \tau_R } & \leq
\frac{c_1 ( \log R - \log \norm{z} ) + O(R^{-1} + \norm{z}^{-2}) }
{ c_1 ( \log R - \log r ) + O(r^{-2}) } \\
& \leq C \cdot \frac{\log (R/ \norm{z})}{\log (R/r)},
\end{align*}
for some constant $C > 0$.
\end{proof}

\begin{lem} \label{lem: from z to alpha z}
For any $0 < \alpha < 1$, there exists a constant $C >0$ such that the following holds.
Let $z \in \Z^2$ such that $\norm{z} \geq 1/\alpha$.  Then for any $n \in \N$ such that
$n > \norm{z}^4$,
$$ \Pr_z \br{ T_{\alpha \norm{z}} \geq n } \leq \frac{C}{\log (n / \norm{z}^4) } . $$
\end{lem}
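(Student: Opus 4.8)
The plan is to interpolate a large radius $R$ between the starting radius $\norm{z}$ and the target ball of radius $\alpha\norm{z}$, and to control separately the probability that the walk wanders out past radius $R$ before reaching the target and the probability that it stays trapped inside $B(\vec 0,R)$ for $n$ steps. Write $m = n/\norm{z}^4$; since $n > \norm{z}^4$ we have $m>1$, and if $m$ is below a fixed constant $m_0$ the right-hand side exceeds $1$ (after enlarging $C$), so the bound is trivial and we may assume $m\geq m_0$. Choose $R = \sqrt{n/\log m}$. The elementary inclusion
$$ \set{T_{\alpha\norm{z}}\geq n}\subseteq\set{\tau_R < T_{\alpha\norm{z}}}\cup\set{\tau_R\geq n} $$
(if the walk has not reached radius $\alpha\norm{z}$ by time $n$, then either it exited $B(\vec 0,R)$ first, or it never exited by time $n$) reduces the problem to bounding the two probabilities on the right.

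For the first term I would mimic the optional-stopping computation behind Lemmas \ref{lem: from m to k} and \ref{lem: from z to hit r before R}. Writing $\rho=\alpha\norm{z}\geq 1$, so that $T_\rho=T_{\alpha\norm{z}}$, and applying optional stopping to the martingale $a(S(t))$ (the potential kernel of Lemma \ref{lem: from m to k}) at $\min\set{T_\rho,\tau_R}$ --- legitimate since $\norm{S(t)}>\rho\geq 1$ forces $S(t)\neq\vec 0$ before $T_\rho$ --- yields
$$ \Pr_z\br{\tau_R < T_{\alpha\norm{z}}} = \frac{a(z)-\E_z\Br{a(S(T_\rho))}{T_\rho<\tau_R}}{\E_z\Br{a(S(\tau_R))}{\tau_R<T_\rho}-\E_z\Br{a(S(T_\rho))}{T_\rho<\tau_R}}. $$
Using $a(w)=c_1\log\norm{w}+c_2+O(\norm{w}^{-2})$ and $\norm{z}/\rho=1/\alpha$ (a constant), the numerator equals $c_1\log(1/\alpha)+O(1)=O(1)$ (the $O(1)$ absorbing the bounded overshoot, and, when $\rho$ is of bounded size, the bounded value of $a(z)$), while the denominator is at least $c_1\log(R/\norm{z})-O(1)$; hence this term is at most $C_1/\log(R/\norm{z})$. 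Since $R/\norm{z}=\norm{z}\sqrt{m/\log m}$ and $\norm{z}\geq 1$, we have $\log(R/\norm{z})\geq\tfrac12(\log m-\log\log m)\geq\tfrac14\log m$ for $m\geq m_0$, so the first term is at most $4C_1/\log m$.

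The main obstacle is the second term, $\Pr_z\br{\tau_R\geq n}$, the probability that the walk stays confined to $B(\vec 0,R)$ for $n$ steps; note that Lemma \ref{lem: maximum of RW} bounds the probability of travelling far, which is the opposite direction and does not help here. Instead I would use a block argument: from any starting point $w$ with $\norm{w}<R$ one has $\Pr_w\br{\tau_R\leq\lceil R^2\rceil}\geq\Pr_w\br{\norm{S(\lceil R^2\rceil)}\geq R}\geq c_0$ for a universal $c_0>0$, because by the central limit theorem $S(\lceil R^2\rceil)-w$ is, on the scale $R$, a non-degenerate Gaussian that escapes the radius-$R$ ball around the origin with probability bounded below uniformly in $w$ and in large $R$. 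Applying the strong Markov property at the times $\lceil R^2\rceil,2\lceil R^2\rceil,\dots$ gives $\Pr_z\br{\tau_R\geq n}\leq(1-c_0)^{\lfloor n/\lceil R^2\rceil\rfloor}\leq C'\exp(-\kappa\, n/R^2)$ for some $\kappa>0$. With our choice $n/R^2=\log m$ this is $C' m^{-\kappa}$, which is at most $C''/\log m$ for $m\geq m_0$, since a power beats a logarithm. Adding the two estimates gives $\Pr_z\br{T_{\alpha\norm{z}}\geq n}\leq C/\log m = C/\log(n/\norm{z}^4)$, as required. The points needing care are the uniform-in-$w$ escape bound $c_0$ (handled by the CLT for large $R$, which holds since $R=\sqrt{n/\log m}\to\infty$ in our regime) and the verification that the radii satisfy $\rho<\norm{z}<R$ and $R\geq 2\norm{z}$, both of which follow from $m\geq m_0$.
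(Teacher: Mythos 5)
Your proof is correct, and its skeleton is the same as the paper's: split $\set{T_{\alpha\norm{z}}\geq n}$ into $\set{T_{\alpha\norm{z}}\geq \tau_R}\cup\set{\tau_R> n}$ for an intermediate radius $R$, and bound the first event with the potential kernel exactly as in Lemmas~\ref{lem: from m to k} and~\ref{lem: from z to hit r before R} (your care with the numerator when $\rho=\alpha\norm{z}$ is of bounded size is a point the paper glosses over). Where you diverge is the choice of $R$ and the treatment of the confinement term. The paper takes $R=n^{1/4}$, so that the martingale $\norm{S(t)}^2-t$ gives $\E_z\br{\tau_R}\leq (R+1)^2\approx\sqrt{n}$ and a single application of Markov's inequality yields $\Pr_z\br{\tau_R>n}\leq 4/\sqrt{n}$, which beats the target bound outright; the price is a slightly smaller $\log(R/r)=\tfrac14\log(n/\norm{z}^4)+O(1)$ in the first term, which is harmless. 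You instead take $R=\sqrt{n/\log m}$ and compensate with an exponential confinement estimate $\Pr_z\br{\tau_R\geq n}\leq C'\exp(-\kappa n/R^2)=C'm^{-\kappa}$ via a block/CLT argument. That argument is fine (and the uniform-in-$w$ escape probability you flag is genuinely the point needing care; note you could get it without any CLT by the same martingale: $\E_w\br{\tau_R}\leq (R+1)^2$, so $\Pr_w\br{\tau_R>2(R+1)^2}\leq 1/2$, and iterate), but it is also unnecessary even for your own choice of $R$: one plain Markov step gives $\Pr_z\br{\tau_R>n}\leq (R+1)^2/n\approx 1/\log m$, which is already the required bound. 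So the comparison is: the paper buys simplicity by taking $R$ small enough that first-moment confinement is trivially strong, while your larger $R$ buys nothing here and forces (or, as just noted, merely tempts you into) heavier machinery.
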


\begin{proof}
By adjusting the constant, we can assume without loss of generality that $n/\norm{z}^4$ is large enough.
Let $r = \alpha \norm{z}$ and $R = n^{1/4}$.
Using the potential kernel from the proof of Lemma \ref{lem: from m to k}
with the stopping time $T' = \min \set{ T_r , \tau_R }$,
\begin{align} \label{eqn: T(r) geq tau(R)}
\Pr_z \br{ T_r \geq \tau_R } & \leq \frac{c_1 \log (\norm{z}/r) + O(r^{-1}) }{c_1 \log (R/r) + O(r^{-1}) }
\leq \frac{C_1}{\log (n/\norm{z}^4) } ,
\end{align}
for some constant $C_1 = C_1(\alpha) > 0$ independent of $z$ and $n$.
Also, considering the martingale $\norm{S(t)}^2-t$ up to time $\tau_R$
shows that
$\E_z \br{ \tau_R } \leq (R+1)^2$.
Thus, by Markov's inequality,
\begin{align} \label{eqn: decay of exit prob.}
\Pr_z \br{ \tau_R > n } & \leq \frac{4}{\sqrt{n}} .
\end{align}
\eqref{eqn: T(r) geq tau(R)} and \eqref{eqn: decay of exit prob.}
together prove the proposition, since
\[ \Pr_z \br{ T_r \geq n } \leq \Pr_z \br{ T_r \geq \tau_R } + \Pr_z \br{ \tau_R > n } .
\qquad \qedhere \]
\end{proof}

\begin{lem} \label{lem: sqrt(n) to r}
There exists a constant $C>0$ such that for all $n \in \N$
and $1 \leq r \leq \frac{1}{2} \sqrt{n}$ the following holds.
Let $z \in \Z^2$ be such that $\norm{z} \geq \sqrt{n}$.  Then,
$$ \Pr_z [ T_r \leq n ] \leq \frac{C}{\log (n/r^2)} . $$
\end{lem}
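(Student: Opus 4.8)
The plan is to control the supremum
\[ P \eqdef \sup \set{ \Pr_z \br{ T_r \leq n } \ : \ \norm{z} \geq \sqrt n } \]
by a renewal (self-bounding) argument that closes after a single excursion, rather than by a direct two-term split. The naive approach---bound the chance to hit $B_r$ before exiting a ball $B_R$, plus the chance to exit $B_R$ within time $n$---does not reach $1/\log(n/r^2)$: to force the exit probability of Lemma~\ref{lem: maximum of RW} down to $1/\log(n/r^2)$ one must take $R$ of order $\sqrt{n\log\log(n/r^2)}$, at which point Lemma~\ref{lem: from z to hit r before R} contributes a spurious $\log\log\log(n/r^2)$ factor. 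I would avoid this by using an exit ball whose radius is only a fixed \emph{multiple} of $\norm{z}$, so that the excursion ``restarts'' at a point still satisfying $\norm{\cdot}\geq\sqrt n$.

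Fix $z$ with $\norm{z}\geq\sqrt n$ and set $R=K\norm{z}$ for a constant $K\geq 2$ to be chosen. Since $r\leq\tfrac12\sqrt n\leq\tfrac12\norm{z}$ and $\tfrac12\norm{z}\leq\tfrac14 R$, Lemma~\ref{lem: from z to hit r before R} applies and gives
\[ \Pr_z \br{ T_r \leq \tau_R } \leq C\,\frac{\log(R/\norm{z})}{\log(R/r)} = C\,\frac{\log K}{\log(K\norm{z}/r)} \leq \frac{2C\log K}{\log(n/r^2)}, \]
where I used $\log(K\norm{z}/r)\geq\log(\sqrt n/r)=\tfrac12\log(n/r^2)$; note this bound is uniform over $\norm{z}\geq\sqrt n$.

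Next I would split $\set{T_r\leq n}\subseteq\set{T_r\leq\tau_R}\cup\set{\tau_R<T_r\leq n}$. On the second event the walk reaches radius $R$ at time $\tau_R\leq n$ without yet hitting $B_r$; applying the strong Markov property at $\tau_R$, using $\norm{S(\tau_R)}\geq R\geq\sqrt n$ and that the remaining time is at most $n$, the conditional probability of subsequently hitting $B_r$ is at most $P$, so
\[ \Pr_z \br{ \tau_R < T_r \leq n } \leq \Pr_z \br{ \tau_R \leq n }\cdot P. \]
Exiting $B_R$ from $z$ requires travelling distance at least $R-\norm{z}=(K-1)\norm{z}\geq(K-1)\sqrt n$, so by translation invariance and Lemma~\ref{lem: maximum of RW},
\[ \Pr_z \br{ \tau_R \leq n } \leq c_1\exp\sr{ -c_2(K-1)^2\norm{z}^2/n } \leq c_1\exp\sr{ -c_2(K-1)^2 } =: q_K, \]
uniformly in $z$ since $\norm{z}^2/n\geq 1$.

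Combining the two estimates and taking the supremum over $\norm{z}\geq\sqrt n$ gives $P\leq\frac{2C\log K}{\log(n/r^2)}+q_K\,P$. Choosing $K$ (depending only on the absolute constants $c_1,c_2$) large enough that $q_K\leq\tfrac12$, and using $P\leq 1<\infty$, this rearranges to $P\leq\frac{4C\log K}{\log(n/r^2)}$, which is the claim. The step to get right is the self-referential closing: the exit radius must be proportional to $\norm{z}$ so that the restarted walk again satisfies $\norm{\cdot}\geq\sqrt n$ and the first term stays uniformly $O(1/\log(n/r^2))$, and one only needs $q_K<1$ rather than $q_K$ small---this is exactly what eliminates the iterated-logarithm loss of the naive bound, and it also handles large $\norm{z}$ with no separate case since $R=K\norm{z}$ scales accordingly.
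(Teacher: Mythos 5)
Your proof is correct, and it closes the argument by a genuinely different mechanism than the paper, though both rest on the same two ingredients (Lemmas \ref{lem: from z to hit r before R} and \ref{lem: maximum of RW}). The paper does not use a fixed-point inequality: it decomposes $\set{T_r \leq n}$ according to the annulus in which the walk first hits the ball of radius $r$, setting $A_m = \set{ \tau_{m \norm{z}} < T_r \leq \tau_{(m+1)\norm{z}} \leq n }$, and bounds each term via the strong Markov property at $\tau_{m\norm{z}}$ by
$\Pr_z \br{ \tau_{m\norm{z}} \leq n } \cdot \max_x \Pr_x \br{ T_r \leq \tau_{(m+1)\norm{z}} } \leq C_1 e^{-c_2 m^2} \cdot c_3/\log(n/r^2)$,
then sums the resulting series over $m$, using the Gaussian tail from Lemma \ref{lem: maximum of RW} to make it converge. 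Your single-excursion renewal version instead takes $P = \sup_{\norm{z} \geq \sqrt{n}} \Pr_z \br{ T_r \leq n }$, applies each ingredient lemma exactly once with exit radius $R = K\norm{z}$, and closes via $P \leq a + q_K P$ with $q_K \leq \tfrac12$; this trades the summable sequence of annular estimates for the one-shot requirement $q_K < 1$, which is slightly more economical and, as you note, is what evades the iterated-logarithm loss of a naive two-term split with a single absolute radius. The delicate points of your route --- that the supremum is finite ($P \leq 1$), that the restarted walk again satisfies $\norm{S(\tau_R)} \geq R \geq \sqrt{n}$ so the same $P$ bounds the continuation, the monotonicity $\Pr_w \br{ T_r \leq n - \tau_R } \leq \Pr_w \br{ T_r \leq n }$, and the uniformity of the first-excursion bound via $\log(K\norm{z}/r) \geq \tfrac12 \log(n/r^2)$ --- are all checked, so the argument is complete; the paper's annular decomposition buys a proof with no self-reference, while yours buys fewer estimates and a cleaner quantitative bookkeeping.
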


\begin{proof}
For $m \geq 1$, let $A_m$ be the event
$\set{ \tau_{m \norm{z}} < T_r \leq \tau_{(m+1) \norm{z} } \leq n }$.
The family $\set{A_m}$ consists of pairwise disjoint events, and
$$\Pr_z [ T_r \leq n ] \leq \sum_{m = 1}^{\infty} \Pr [ A_m ] . $$
For every $m \geq 1$, using the strong Markov property at time $\tau_{m \norm{z}}$,
\begin{align*}
\Pr_z [A_m] & \leq \Pr_z [ \tau_{m \norm{z} } \leq n ] \cdot
\max \set{ \Pr_x [T_r \leq \tau_{(m+1) \norm{z} } ] \ : \
m \norm{z} \leq \norm{x} \leq m \norm{z} + 1 } .
\end{align*}
By Lemma \ref{lem: maximum of RW}, there exist constants $C_1, c_2 > 0$ such that
\begin{align*}
\Pr_z [ \tau_{m \norm{z} } \leq n ] & \leq
\Pr_z \Big[ \max_{1 \leq j \leq n} \norm{S(j)} \geq m \norm{z} - \norm{z} \Big]
\leq C_1 \exp \sr{ - c_2 m^2 } .
\end{align*}
By Lemma \ref{lem: from z to hit r before R},
for any $x \in \Z^2$ such that $m \norm{z} \leq \norm{x} \leq m \norm{z} +1$,
\begin{align*}
\Pr_x [T_r \leq \tau_{(m+1) \norm{z} } ]
%& \leq \frac{a((m+1)\sqrt{n} + 1) - a(m \sqrt{n})}{ a((m+1)\sqrt{n} +1) - a(r) }
\leq \frac{c_3}{ \log (n/r^2) } ,
\end{align*}
for some constant $c_3 > 0$. Summing over all $m \geq 1$,
\[ \Pr_z [ T_r \leq n ] \leq \frac{c_3}{ \log (n/r^2) }
\sum_{m=1}^{\infty} c_1 \exp \sr{ - c_2 \cdot m^2 } . \qedhere\]
%   OLD
%   By adjusting the constant $C$, we can assume without loss of generality that
%   $r < \sqrt{n}/2 \leq \norm{z}/2$.
%   For $m \geq 1$, let $A_m$ be the event
%   $\set{ \tau_{m \sqrt{n}} < T_r \leq \tau_{(m+1) \sqrt{n}} \leq n }$.
%   The family $\set{A_m}$ consists of pairwise disjoint events.
%   Furthermore, $$ \set{ T_r \leq n } \subset \bigcup_{m=1}^{\infty} A_m . $$
%   For every $m \geq 1$, using the strong Markov property at time $\tau_{m \sqrt{n}}$,
%   \begin{align*}
%   \Pr_z [A_m] & \leq \Pr_z [ \tau_{m \sqrt{n} } \leq n ] \cdot
%   \max \set{ \Pr_x [T_r \leq \tau_{(m+1) \sqrt{n}} ] \ : \
%   m \sqrt{n} \leq \norm{x} \leq m \sqrt{n} + 1 } .
%   \end{align*}
%   Using the potential kernel as in Lemma \ref{lem: from m to k},
%   for any $x \in \Z^2$ such that $m \sqrt{n} \leq \norm{x} \leq m \sqrt{n} +1$,
%   \begin{align*}
%   \Pr_x [T_r \leq \tau_{(m+1) \sqrt{n}} ]
%   %& \leq \frac{a((m+1)\sqrt{n} + 1) - a(m \sqrt{n})}{ a((m+1)\sqrt{n} +1) - a(r) }
%   \leq \frac{c}{ \log (n/r^2) } ,
%   \end{align*}
%   for some constant $c>0$.
%   By Lemma \ref{lem: maximum of RW},
%   there exist constants $c', c'' > 0$ such that
%   \begin{align*}
%   \Pr_z [ \tau_{m \sqrt{n} } \leq n ] & \leq
%   \Pr_z \Big[ \max_{1 \leq j \leq n} \norm{S(j)} \geq m \sqrt{n} \Big]
%   \leq c' \cdot \exp \sr{ - c'' \cdot m^2 } .
%   \end{align*}
%   Summing over all $m \geq 1$,
%   $$ \Pr_z [ T_r \leq n ] \leq \frac{c}{ \log (n/r^2) } \cdot c' \cdot
%   \sum_{m=1}^{\infty} \exp \sr{ - c'' \cdot m^2 } . $$
\end{proof}

\subsubsection{Upper bound in two dimensions}

For $z \in \Z^2$ and $k \in \N$, let $Q(z,k) =
\set{ z+(j,j')  \ : \ -k \leq j,j' \leq k }$; i.e., $Q(z,k)$
is the square of side length $2k+1$ centered at $z$.
For a path $x(0),x(1),\ldots,x(n)$ in $\Z^2$, we denote by
$x[s,t]$ the path $x(s),x(s+1),\ldots,x(t)$.

\begin{lem} \label{lem: Prob R hits Q}
There exist constants $c,C>0$ such that for all $n,k \in \N$
such that $k \leq n^{1/4}$, and all $z \in \Z^d$ such that $\norm{z} \geq 5 \sqrt{n}$,
$$ \Pr \br{ R(n) \cap Q(z,k) \neq \emptyset } \leq
\frac{C}{\log n} \cdot \exp \Big( - c \frac{\norm{z}^2}{n} \Big) . $$
\end{lem}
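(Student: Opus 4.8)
The plan is to reduce hitting the square to hitting an $L^2$-ball and then to decouple the event into two essentially independent difficulties: first \emph{reaching the vicinity} of $z$, which is costly because $z$ is far (distance $\geq 5\sqrt{n}$), and then \emph{actually hitting the small target}, which is costly because the square is microscopic on the diffusive scale. Since the corner of $Q(z,k)$ farthest from $z$ lies at $L^2$-distance $k\sqrt{2}$, we have $Q(z,k) \subseteq \set{ x : \norm{x-z} \leq r }$ with $r = \max\set{1, k\sqrt{2}}$, so $\set{ R(n) \cap Q(z,k) \neq \emptyset } \subseteq \set{ T_{z,r} \leq n }$ and it suffices to bound $\Pr[ T_{z,r} \leq n ]$.

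Set $\rho = \norm{z}/2$ and let $\sigma = T_{z,\rho}$ be the first time the walk comes within $\rho$ of $z$. Since $r \leq \rho$, the event $\set{ T_{z,r} \leq n }$ forces $\sigma \leq n$, and after time $\sigma$ the walk must still hit $\set{ x : \norm{x-z} \leq r }$ within the remaining $\leq n$ steps. Applying the strong Markov property at $\sigma$ gives
$$ \Pr[ T_{z,r} \leq n ] \leq \Pr[ \sigma \leq n ] \cdot \max_x \Pr_x[ T_{z,r} \leq n ] , $$
where the maximum is over the possible values $x = S(\sigma)$, which (the step size being $1$) satisfy $\rho - 1 \leq \norm{x-z} \leq \rho$.

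To bound the first factor, note $\sigma \leq n$ forces $\max_{1 \leq j \leq n} \norm{S(j)} \geq \norm{z} - \rho = \norm{z}/2$ by the triangle inequality, so Lemma~\ref{lem: maximum of RW} yields $\Pr[ \sigma \leq n ] \leq c_1 \exp( - c_2 \norm{z}^2 / (4n) )$; this produces the Gaussian factor. For the second factor, translation invariance of the walk rewrites $\Pr_x[ T_{z,r} \leq n ]$ as the probability of hitting the $r$-ball about the origin started at distance $\norm{x-z} \geq \rho - 1 \geq \sqrt{n}$; since $r^2 \leq 2\sqrt{n}$ forces $n/r^2 \geq \sqrt{n}/2$, Lemma~\ref{lem: sqrt(n) to r} applies and bounds this by $C / \log(n/r^2) \leq C' / \log n$. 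Multiplying the two estimates gives the claimed bound.

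The side conditions required to invoke the two lemmas, namely $r \leq \tfrac{1}{2}\sqrt{n}$ and $\rho - 1 \geq \sqrt{n}$, hold once $n$ exceeds an absolute constant, using $k \leq n^{1/4}$ and $\norm{z} \geq 5\sqrt{n}$; for the remaining bounded range of $n$ the event is either empty (the walk cannot travel distance $\norm{z} - r \geq 5\sqrt{n} - \sqrt{2}\,n^{1/4}$ in $n$ steps once $\sqrt{n} < 5$) or else the crude estimate $\Pr[ T_{z,r} \leq n ] \leq c_1 \exp(-c_2 \norm{z}^2/(4n))$ from Lemma~\ref{lem: maximum of RW} alone suffices, with the bounded $1/\log n$ absorbed into $C$. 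I expect the only real subtlety to be the decoupling itself: one must verify that at the random time $\sigma$ the walk sits at the natural scale $\norm{x-z} \geq \sqrt{n}$, so that the Gaussian cost of approaching $z$ and the logarithmic cost of locating the tiny square are charged separately rather than once, which is exactly what the strong Markov split at $\sigma$ accomplishes.
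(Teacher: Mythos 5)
Your proposal is correct and takes essentially the same route as the paper: the paper also decouples via the strong Markov property at an intermediate circle, bounding the probability of getting near $z$ by Lemma~\ref{lem: maximum of RW} (yielding the Gaussian factor) and the subsequent hitting of the small ball covering $Q(z,k)$ by Lemma~\ref{lem: sqrt(n) to r} (yielding the $1/\log n$ factor). The only differences are cosmetic: the paper splits at the exit time $\tau_\lambda$ with $\lambda = \norm{z} - 2\sqrt{n}$ around the origin rather than at your entrance time $T_{z,\norm{z}/2}$, covers the square by a ball of radius $2k$ instead of $\max\{1,k\sqrt{2}\}$, and leaves the small-$n$ regime to the constants, which you handle explicitly.
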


\begin{proof}
Let $\lambda = \norm{z} - 2 \sqrt{n}$.
%Let $B = \set{ w \in \Z^d \ : \ \norm{w-z} \leq 2 k}$. So $Q(z,k) \subset B$.
Let $T$ be the first time the walk $S(\cdot)$ started at $0$ hits $Q(z,k)$.
Then $\tau_{\lambda} < T_{z,2k} < T$.
%Furthermore, we also have that $\norm{S(\tau_{\sqrt{n}}) -z} \geq \sqrt{n}$.
By Lemmas \ref{lem: maximum of RW} and \ref{lem: sqrt(n) to r},
\begin{align*}
\Pr \br{ R(n) \cap Q(z,k) \neq \emptyset } & \leq \Pr [ \tau_{\lambda} \leq n ] \cdot
\max \set{ \Pr_x [ T_{z,2k} \leq n ] \ : \ \lambda \leq \norm{x} \leq \lambda + 1 } \\
& \leq \Pr \Big[ \max_{1 \leq j \leq n} \norm{S(j)} \geq \lambda \Big] \cdot \frac{c_1}{\log n} \\
& \leq \frac{c_2}{\log n} \cdot \exp \Big( - c_3 \frac{\norm{z}^2}{n} \Big) ,
\end{align*}
for some constants $c_1,c_2,c_3>0$.
\end{proof}

\begin{lem} \label{lem: Prob R hits near Q}
There exists a constant $C>0$ such that the following holds.
For all $n,k \in \N$ such that $k \leq n^{1/4}$, and all $z \in \Z^d$ such that $1 \leq \norm{z} < 5 \sqrt{n}$,
$$ \Pr \br{ R(n) \cap Q(z,k) \neq \emptyset } \leq
C \cdot \frac{\log (10\sqrt{n} / \norm{z})}{\log n} . $$
\end{lem}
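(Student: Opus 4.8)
The plan is to first reduce hitting the square to hitting a ball. Since every point of $Q(z,k)$ lies within $L^2$-distance $\sqrt2\,k\le 2k$ of $z$, the event $\set{R(n)\cap Q(z,k)\neq\emptyset}$ is contained in $\set{T_{z,2k}\le n}$, so it suffices to bound $\Pr\br{T_{z,2k}\le n}$. Write $\rho=\norm{z}$ and $r=2k$. When $\rho<4k$ the starting point sits essentially on top of the target ball, and there the right-hand side is already of constant order: since $\rho\le 4n^{1/4}$ one checks $\log(10\sqrt n/\rho)/\log n\ge\tfrac14$ for large $n$, so the trivial bound $\Pr\le1$ suffices after enlarging $C$. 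Hence from now on I may assume $r\le\tfrac12\rho$.

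For the main case I fix the scale $R=20\sqrt n$ and split according to whether the walk reaches $B(z,r)$ before or after it first leaves $B(z,R)$. If it hits first, I apply Lemma~\ref{lem: from z to hit r before R} in the frame centered at $z$ (the start point is at distance $\rho$, which is legitimate because $1\le r\le\tfrac12\rho\le\tfrac14 R$), bounding the probability by $C\,\log(R/\rho)/\log(R/r)$. Using $\log(R/r)=\log(10\sqrt n/k)\ge\log(10n^{1/4})\ge\tfrac14\log n$ together with the elementary inequality $\log(20\sqrt n/\rho)\le 2\log(10\sqrt n/\rho)$ (valid precisely because $\rho<5\sqrt n$), this term is at most a constant multiple of $\log(10\sqrt n/\rho)/\log n$, which is exactly the desired form.

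If instead the walk leaves $B(z,R)$ first, I invoke the strong Markov property at the exit time $\tau_{z,R}$, which on this event satisfies $\tau_{z,R}\le n$. At that instant the walk stands at distance at least $R=20\sqrt n\ge\sqrt n$ from $z$ and must still reach $B(z,r)$ within the remaining (at most $n$) steps, so Lemma~\ref{lem: sqrt(n) to r} bounds the conditional probability by $C/\log(n/r^2)$. Since $r^2=4k^2\le 4\sqrt n$ we have $\log(n/r^2)\ge\log(\sqrt n/4)\ge\tfrac13\log n$, so this contributes a term of order $1/\log n$, again dominated by $\log(10\sqrt n/\rho)/\log n$ because $\log(10\sqrt n/\rho)>1$. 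Summing the two contributions and absorbing small $n$ into the constant completes the proof.

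The one genuinely delicate point is this second case. A recurrent planar walk hits any fixed ball eventually with probability one, so the bound can only come from the time budget; yet leaving the diffusive scale $B\bigl(z,\Theta(\sqrt n)\bigr)$ is unlikely merely by a constant factor, not by $o(1)$. The reason the argument nonetheless closes is that the window radius $R$ must be taken of order $\sqrt n$ so that the first (dominant) term stays sharp, while the smallness of the return contribution is supplied not by the cost of escaping to distance $\sqrt n$ but by Lemma~\ref{lem: sqrt(n) to r}, which already delivers the $1/\log n$ decay for a walk that restarts at distance $\ge\sqrt n$. Balancing these two competing requirements on $R$ is the crux; the rest is bookkeeping with logarithms.
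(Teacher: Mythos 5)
Your proof is correct, and its main term coincides with the paper's: the paper makes the same preliminary reduction (it assumes $\norm{z}\geq 3k$, disposing of the remaining range by the trivial bound, exactly as you do for $\norm{z}<4k$), and it too bounds the probability of hitting the radius-$2k$ ball about $z$ before exiting the radius-$20\sqrt n$ ball about $z$ via Lemma~\ref{lem: from z to hit r before R}, yielding $C\log(10\sqrt n/\norm{z})/\log n$. Where you genuinely diverge is the tail. The paper does not split at a single scale: it sets $\sigma_i=\tau_{10^i\sqrt n}$ and decomposes the complement over infinitely many annulus crossings $S[\sigma_i,\sigma_{i+1}]$, bounding each by $\Pr[\sigma_i\leq n]\leq C\exp(-c\cdot 10^{2i})$ (Lemma~\ref{lem: maximum of RW}) times $C/\log n$ (Lemma~\ref{lem: from z to hit r before R} again), then summing the series. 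You instead apply the strong Markov property once at $\tau_{z,R}$ with $R=20\sqrt n$ and invoke Lemma~\ref{lem: sqrt(n) to r}: after exiting, the walk is at distance $\geq\sqrt n$ from $z$ and must hit the small ball within at most $n$ steps, costing $C/\log(n/r^2)=O(1/\log n)$. Since Lemma~\ref{lem: sqrt(n) to r} is itself proved by precisely the multiscale-plus-Gaussian-tail summation that the paper repeats inside this proof, the two arguments rest on the same estimate; what your version buys is economy and no duplicated work, at the cost only of the routine hypothesis checks you carry out ($2k\leq\tfrac12\norm{z}\leq\tfrac14 R$, $2k\leq\tfrac12\sqrt n$ for $n$ large, and $\log(n/r^2)\geq\tfrac13\log n$ from $k\leq n^{1/4}$), all of which are valid, with small $n$ correctly absorbed into the constant.
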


\begin{proof}
By adjusting the constant, we can assume without loss of generality that $\norm{z} \geq 3k$.
Let $Q = Q(z,k)$.
Define $\sigma_0 = 0$, and for $i \geq 1$, define
$$ \sigma_{i} = \tau_{10^i \sqrt{n}} = \inf \set{ t \geq 0 \ : \ \norm{S(t)} \geq 10^i \sqrt{n} }.$$
The event
$\set{ R(n) \cap Q \neq \emptyset}$ is contained in the event
$$\set{S[0,\sigma_1] \cap Q \neq \emptyset} \cup
\bigcup_{i \geq 1} \set{S[\sigma_i,\sigma_{i+1}] \cap Q \neq \emptyset \ , \ \sigma_i \leq n} . $$

Since $3k \leq \norm{z} < 5 \sqrt{n}$, we have that the event
$\set{ S[0,\sigma_1] \cap Q \neq \emptyset }$ implies that the
random walk started at $0$ hits the ball of radius $2k$ around $z$
before exiting the ball of radius $20 \sqrt{n}$ around $z$.
Translating by minus $z$ we get by Lemma~\ref{lem: from z to hit r
before R}
%***there is a small mistake here: lemma 10 assumes that the hittable
%object (here $Q$) and the outer boundary (here $B(0,10\sqrt{n})$) are
%centered, which is not true here.***
that there exists a constant $C_1 > 0$ such that
$$ \Pr \br{ S[0,\sigma_1] \cap Q \neq \emptyset } \leq
\Pr_{-z} \br{ T_{2k} \leq \tau_{20 \sqrt{n}} }
\leq C_1 \cdot \frac{\log (10 \sqrt{n} / \norm{z})}{\log n} . $$
Fix $i \geq 1$. By Lemma~\ref{lem: maximum of RW},
$$ \Pr [ \sigma_i \leq n] \leq \Pr \Big[ \max_{0 \leq j \leq n} \norm{S(j)} \geq 10^{i} \sqrt{n} \Big]
\leq C_2 \cdot \exp \big( - C_3 \cdot 10^{2i} \big) ,$$
for some constants $C_2,C_3 > 0$.
Using Lemma~\ref{lem: from z to hit r before R} again,
\begin{align*}
\Pr [ S[ \sigma_i, \sigma_{i+1} ] \cap Q \neq \emptyset \ | \ \sigma_i \leq n] \leq
 \frac{C_4}{\log n} ,
\end{align*}
for some constant $C_4 > 0$.
Therefore,
\[
\Pr [ R(n) \cap Q \neq \emptyset ] \leq C_1 \cdot \frac{\log (10 \sqrt{n} / \norm{z})}{\log n}
+ \frac{C_2 \cdot C_4}{\log n} \sum_{i \geq 1} \exp \big( - C_3 \cdot 10^{2i} \big).\qedhere
\]
\end{proof}

We have reached the main geometric lemma,

\begin{lem} \label{lem: boundary in square}
There exists a constant $C>0$ such that the following holds.
Let $n,k \in \N$, let $Q = Q(0,k)$ and let $z \sim Q$.
Then,
$$ \Pr_z \br{ \p R(n) \cap Q \neq \emptyset } \leq C \cdot \frac{\log^2 k}{\log n} . $$
\end{lem}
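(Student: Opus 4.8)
The plan is to show that a boundary point of the range inside $Q$ forces an \emph{exposed hole} --- a vertex $u$ that the walk never visits but one of whose neighbours it does visit --- to lie in a unit enlargement of $Q$, and then to control the location of such a hole scale by scale. Concretely, if $\p R(n)\cap Q\neq\emptyset$ there is a visited $w\in Q$ and an unvisited $u\sim w$, so $u\in Q(0,k+1)$ and $1\le\|u\|\le\sqrt2(k+1)$. I would group the possible positions of $w$ (equivalently of $u$) into the dyadic annuli $\set{x:2^i\le\|x\|<2^{i+1}}$ for $0\le i\le\log_2 k+O(1)$, of which there are only $O(\log k)$. The target is a per-annulus estimate
\[
\Pr_z\br{\p R(n)\text{ has a point at scale }2^i}\ \le\ C\,\frac{\log(2^i)}{\log n},
\]
since summing a bound of order $\log(2^i)/\log n=O(i/\log n)$ over $i=0,\dots,\log_2 k$ gives $\sum_i i/\log n\asymp\log^2 k/\log n$, which is exactly the claimed bound. (When $\log^2 k\ge\log n$ the asserted inequality is vacuous, so one may assume $k\le 2^{\sqrt{\log n}}=n^{o(1)}$, which in particular keeps every scale $2^i\le k$ far below the diffusive scale $\sqrt n$.)

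For the per-annulus estimate I would argue as follows. Starting from $z$ with $\|z\|\asymp k$, the walk reaches scale $2^i$ with high probability, penetrating inward one constant factor at a time within the time budget via Lemma~\ref{lem: from z to alpha z}; so reaching the scale is essentially free and cannot account for the bound. The decisive point is that, once near scale $2^i$, the walk returns to that scale of order $\log n$ times before finally leaving the ball of radius $\sqrt n$ (the expected number of visits to a vertex at scale $2^i$ up to time $n$ is of order $\log n$), each return standing a chance to fill in nearby vertices; because each such return fails to escape to $\sqrt n$ only with probability $O(1/\log n)$ by the hitting estimates of Lemmas~\ref{lem: from m to k} and \ref{lem: from z to hit r before R}. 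With that many returns the trace locally \emph{fills in} its neighbourhood --- the range is locally solid, boundary points being only an $O(1/\log n)$ fraction of it --- so an exposed hole survives at scale $2^i$ only if the walk escapes to $\sqrt n$ prematurely, before this filling is complete. Quantifying ``premature escape'' through the potential kernel used in the proof of Lemma~\ref{lem: from m to k} gives that, from scale $2^i$, the probability of reaching $\sqrt n$ before the central region is re-covered is of order $a(2^i)/a(\sqrt n)\asymp\log(2^i)/\log n$, the required per-annulus bound.

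The main obstacle is precisely this per-annulus estimate, because the boundary points of $R(n)$ are \emph{strongly clustered}: conditioned on $Q$ containing any boundary point at all, it typically contains very many, so the expected count $\E_z\br{|\p R(n)\cap Q|}$ is of order $k^2\log k/\log n$ --- far larger than the probability $\log^2 k/\log n$ we are after. Consequently a first-moment/Markov bound (the mechanism behind the global lower bound via Corollary~\ref{cor: lower bound H(R(n))}) and, equally, any union bound over the $\asymp 4^i$ candidate holes in a single annulus are hopelessly lossy: each single hole is exposed with probability $\asymp\log k/\log n$, and $4^i$ times this is enormous. The content of the lemma is therefore entirely in capturing the correlation, i.e.\ in replacing ``some hole is exposed'' by the genuinely rare \emph{global} event that the walk fails to fill in scale $2^i$ before escaping to $\sqrt n$. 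Making the ``fills in before escaping'' step rigorous --- pinning down the right covering threshold at each scale and coupling it cleanly to the escape event controlled by Lemmas~\ref{lem: from m to k}--\ref{lem: from z to alpha z} --- is where the real work lies; the reduction to exposed holes, the dyadic splitting, and the summation over the $O(\log k)$ scales are then routine.
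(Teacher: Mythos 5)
There is a genuine gap, and it sits exactly where you yourself locate the ``real work'': your per-annulus target
$\Pr_z\left[\partial R(n)\text{ meets scale }2^i\right]\le C\,i/\log n$ is false. As you observe, on the event that the walk visits the annulus $A_i=\{x: 2^i\le\|x\|<2^{i+1}\}$, the absence of boundary points there forces the walk to \emph{cover} $A_i$ entirely. Covering carries a coupon-collector factor: by Lemma \ref{lem: from m to k}, a single excursion between $A_i$ and radius $10\cdot 2^i$ hits a fixed point of $A_i$ with probability of order $1/i$, so covering all of the $\asymp 4^i$ points requires $\asymp i\cdot\log(4^i)\asymp i^2$ excursions, while each excursion ends in premature escape to radius $\sqrt n$ (equivalently, exhaustion of the time budget) with probability $\asymp 1/\log n$. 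Hence the true failure probability at scale $2^i$ is of order $i^2/\log n$, not $i/\log n$: your $a(2^i)/a(\sqrt n)$ heuristic computes the probability of escaping before hitting the \emph{origin once}, which is a factor $i$ cheaper than escaping before re-covering the whole annulus. With the correct per-annulus rate, your dyadic sum gives $\sum_{i\le\log_2 k} i^2/\log n\asymp\log^3 k/\log n$, off by a factor $\log k$. Nor can the scale-by-scale framework be repaired: as you correctly note for holes within one annulus, the failure events are strongly positively correlated --- across scales they are essentially a single event (early escape) --- so the union bound over the $O(\log k)$ annuli is itself intrinsically lossy.

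The paper's proof avoids the decomposition altogether. It notes that $\partial R(n)\cap Q\neq\emptyset$ forces $Q^+=Q(0,k+1)\not\subset R(n)$ (equivalent to your exposed-hole reduction) and then runs one \emph{global} excursion scheme on the whole box: excursions between $Q^+$ and the circle of radius $10k$. The crucial point is that Lemma \ref{lem: from m to k} gives each such excursion probability at least $c/\log k$ of hitting \emph{any} fixed point of $Q^+$, uniformly over all scales inside the box; so $\asymp\log k\cdot\log(k^2)\asymp\log^2 k$ excursions cover everything simultaneously --- the same order as covering your single finest annulus, with no per-scale accumulation. Each excursion is slow (duration at least $n/2m$, with $m=\lceil\log k\cdot\log n\rceil$) with probability $O(1/\log n)$, via the martingale bound on the exit time and Lemma \ref{lem: from z to alpha z}; and $\{Q^+\not\subset R(n)\}$ is contained in the union of $\{\sigma_0\ge n/2\}$, failure to cover after all $m$ excursions (probability at most $k^2e^{-cm/\log k}$, negligible), and the events that the $j$-th excursion is slow while coverage is still incomplete, whose probabilities sum, after cutting at $j\asymp\log^2 k$, to exactly $O(\log^2 k/\log n)$. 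Your reduction to exposed holes and your diagnosis that first-moment bounds are hopeless are both sound; it is the dyadic splitting, routine as it looks, that cannot deliver the stated bound.
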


\begin{proof}
Without loss of generality assume that $\log^2 k \leq \log n$.
Define $Q^+ = Q(0,k+1)$. So
$Q^+$ contains the union of $Q$ with all vertices that are adjacent to $Q$.
Define $\tau_0 = 0$, and inductively
$$ \sigma_j = \inf \set{ t \geq \tau_j \ : \ \norm{S(t)} \geq 10 k} , $$
$$ \tau_{j+1} = \inf \set{ t \geq \sigma_j \ : \ S(t) \in Q^+ } . $$
If $Q^+ \subseteq R(n)$ then $\p R(n) \cap Q = \emptyset$.
Thus, it suffices to upper bound the probability of the event $\set{Q^+ \not\subset R(n)}$.
With hindsight choose %$\ell = \lceil 10 \log^2 k \rceil$ and
$m = \lceil \log k \cdot \log n \rceil$.
Set $V_j = \big\{ \sigma_{j+1} - \sigma_j \geq \frac{n}{2m} \big\}$
and $U_j = \set{ Q^+ \not\subset R(\sigma_j) }$.
We prove the following inclusion of events
\begin{align} \label{eqn: event inclusion}
\set{ Q^+ \not\subset R(n) } \subseteq \set{ \sigma_0 \geq n/2 } \cup
U_m \cup \bigcup_{j=0}^{m-1} (U_j \cap V_j) .
\end{align}
Assume that the event on the right-hand side of \eqref{eqn: event inclusion} does not occur;
i.e., assume that $\sigma_0 < n/2$, that $\ov{U_m}$,
and that for all $0 \leq j \leq m-1$, $\ov{U_j} \cup \ov{V_j}$.
Let $J = \min \set{ 0 \leq j \leq m \ : \ \ov{U_j} }$.
Consider the following cases:
\begin{itemize}
\item Case 1: $J=0$. Then $Q^+ \subset R(\sigma_0)$.
Since $\sigma_0 < n/2$, we get that $Q^+ \subset R(n)$.

\item Case 2: $J>0$.
Since we assumed that $\ov{U_m}$, we know that $1 \leq J \leq m$.
By the assumption
$\cap_{j=0}^{m-1} (\ov{U_j} \cup \ov{V_j} )$,
we have that
$\sigma_{j+1} - \sigma_j < n/2m$,
for all $0 \leq j \leq J-1$.
Since we assumed that
$\sigma_0 < n/2$, we get that
$$ \sigma_J = \sigma_0 + \sum_{j=0}^{J-1} \sigma_{j+1} - \sigma_j < n . $$
But $J$ was chosen so that $\ov{U_J}$ occurs, so
$Q^+ \subset R(\sigma_J) \subset R(n)$.
\end{itemize}
This proves \eqref{eqn: event inclusion}.

Fix $j \geq 0$.
The martingale $\norm{S(t)-z}^2-t$ shows that $\E_z [\sigma_j - \tau_j \ | \ \F(\tau_j) ] \leq C_1 k^2$
for some constant $C_1>0$.  Using Markov's inequality,
\begin{align} \label{eqn: sigma-tau geq n/2m}
\Pr_z \Big[ \sigma_{j} - \tau_{j} \geq \frac{n}{4m} \ \Big| \ \F(\tau_{j}) \Big] \leq \frac{C_2 m k^2}{n} ,
\end{align}
for some constant $C_2>0$.
By Lemma \ref{lem: from z to alpha z}, there exists a constant $C_3>0$ such that
\begin{align} \label{eqn: tau-sigma}
\Pr_z \Big[ \tau_{j+1} - \sigma_j \geq \frac{n}{4m}  \ \Big| \  \F(\sigma_j) \Big] \leq \frac{C_3}{\log n} .
\end{align}
The two inequalities, \eqref{eqn: sigma-tau geq n/2m} and \eqref{eqn: tau-sigma}, imply that
\begin{align} \label{eqn: sigma - sigma}
\Pr_z \Br{ V_j }{ \F(\sigma_j) } \leq \frac{C_4}{\log n} ,
\end{align}
for some constant $C_4 > 0$.
Using Lemma \ref{lem: from m to k},
there exists a universal constant $C_5 >0$ such that for any $x \in Q^+$,
$$ \Pr_z \Br{ x \in S[\tau_j, \sigma_j] }{ \F(\tau_j) } \geq \frac{C_5}{\log k} . $$
Thus,
\begin{align} \label{eqn: Q not subset of R(sigma j)}
\Pr_z [ U_j ]  = \Pr_z \br{ Q^+ \not\subset R(\sigma_j) } & \leq
\min \big\{ 1,|Q^+| \cdot ( 1- C_5 /\log k )^{j+1} \big\} \nonumber \\
& \leq \min \big\{ 1, C_6 k^2 \exp ( - C_5 (j+1) / \log k ) \big\} ,
\end{align}
for some constant $C_6>0$.
% We are left with bounding the probability of the event $U_{m+1}$.
% By Lemma \ref{lem: from m to k}, there exists a contant $c>0$ such that
% for any $x \in Q(z,k)$,
% $$ \Pr_z \Br{ x \in S[\tau_j,\sigma_j] }{ \F(\tau_j) } \geq \frac{c}{\log k} . $$
% Summing over all $x \in Q(z,k)$ we get that for any $j$,
% \begin{align} \label{eqn: Uj}
% \Pr_z \br{ U_j } \leq |Q(z,k)| \cdot \exp \sr{ - \frac{c (j+1)}{\log k} } .
% \end{align}
Plugging \eqref{eqn: sigma-tau geq n/2m},
\eqref{eqn: sigma - sigma} and \eqref{eqn: Q not subset of R(sigma j)}
into \eqref{eqn: event inclusion} yields
\begin{align} \label{eqn: Q contains boundary}
\Pr_z \br{ Q^+ \not \subset R(n) }
& \leq \Pr_z \br{ \sigma_0 \geq n/2 } + \Pr_z \br{ U_{m} }
+ \sum_{j=0}^{K} \Pr_z \br{ U_j \ , \ V_j } + \sum_{j > K} \Pr_z \br{ U_j \ , \ V_j } \nonumber \\
%& \leq C_7 \cdot \sr{ \frac{k^2}{n} + n^{-c} +
%\sum_{j=0}^{\ell} \Pr_z \Br{ V_j }{ U_j } +
%\sum_{j=\ell+1}^{m} \Pr_z \Br{ V_j }{ U_j } \Pr_z \br{ U_j} } \nonumber \\
& \leq C_7 \Big( \frac{k^2}{n} + n^{-C_8}
+ \sum_{j=0}^K \frac{1}{\log n} + \sum_{j > K} \frac{k^2 \exp ( - C_5 (j+1) / \log k )}{\log n} \Big) \nonumber \\
& \leq \frac{C_9 \log^2 k}{\log n} ,
\end{align}
where $K = \lceil 4 \log^2 k / C_5 \rceil$ and $C_7,C_8,C_9 > 0$ are constants.
\end{proof}

\begin{dfn}
Define $\Lambda(k) = \set{ (2k+1)z \ : \ z \in \Z^2 }$. The
collection $\set{Q(z,k)}_{z \in \Lambda(k)}$ consists of disjoint squares that cover $\Z^2$.
For $k , n \in \N$ and $z \in \Z^2$,
define $I(z,k,n)$ to be the indicator function of the event $\set{ \p R(n) \cap Q(z,k) \neq \emptyset }$.
Define $$M(k,n) = \sum_{z \in \Lambda(k)} I(z,k,n) ,$$
the number of squares that intersect $\p R(n)$.
\end{dfn}

\begin{lem} \label{lem: number of squares with boundary}
There exists a constant $C>0$ such that for every $k,n \in \N$,
$$ \E \br{ M(k,n) } \leq
C \cdot \max \Big\{ 1 , \frac{n}{k^2} \cdot \frac{\log^2 k}{\log^2 n}  \Big\} .  $$
\end{lem}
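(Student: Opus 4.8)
The plan is to write $\E\br{M(k,n)} = \sum_{z\in\Lambda(k)}\Pr\br{\p R(n)\cap Q(z,k)\neq\emptyset}$ and to control each term by separating the cost of \emph{reaching} a square from the cost of \emph{leaving a boundary point} in it once reached. The engine of the whole estimate is the inequality
\begin{align}\label{eqn: hit factor}
\Pr\br{\p R(n)\cap Q(z,k)\neq\emptyset}\leq\frac{C\log^2 k}{\log n}\cdot\Pr\br{R(n)\cap Q(z,k)\neq\emptyset},
\end{align}
which I claim holds for every $z\in\Lambda(k)$. Granting \eqref{eqn: hit factor}, I would first treat $k>n^{1/4}$ separately (see below) and then, for $k\leq n^{1/4}$, sum the right-hand side over $\Lambda(k)$ after splitting into the \emph{near} squares $1\leq\norm z<5\sqrt n$ and the \emph{far} squares $\norm z\geq5\sqrt n$ (the square containing the origin contributes at most $C\log^2 k/\log n$, which is dominated by the target). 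For near squares I would insert Lemma~\ref{lem: Prob R hits near Q} and for far squares Lemma~\ref{lem: Prob R hits Q}; the two resulting sums reduce to the lattice estimates $\sum_{z\in\Lambda(k),\,\norm z<5\sqrt n}\log(10\sqrt n/\norm z)\lesssim n/k^2$ and $\sum_{z\in\Lambda(k)}\exp(-c\norm z^2/n)\lesssim n/k^2$, each obtained by comparison with the corresponding integral over $\R^2$ (the factor $k^{-2}$ coming from the spacing $2k+1$ of $\Lambda(k)$). Both halves then contribute $\lesssim\frac{\log^2 k}{\log n}\cdot\frac1{\log n}\cdot\frac{n}{k^2}$, which is exactly the claimed bound.

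The main obstacle is \eqref{eqn: hit factor}, and the difficulty is that reaching $Q=Q(z,k)$ costs time, leaving fewer than $n$ steps to fill $Q$ afterwards; since the estimate of Lemma~\ref{lem: boundary in square} \emph{improves} as the number of available steps grows, a naive conditioning at the hitting time of $Q$ loses control when $Q$ is hit late. I would resolve this by combining a forward and a backward hitting argument. First note that, by the same proof, Lemma~\ref{lem: boundary in square} holds for a walk started at any point of $Q$ or adjacent to $Q$, not merely at a neighbour. Let $T$ and $T'$ be the first and last visits of $S(\cdot)$ to $Q$. On $\set{T\leq n/2}$ I would condition at the stopping time $T$: after $T$ the walk is a simple random walk from $S(T)\in Q$ of length $n-T\geq n/2$, and since $S[0,T]\cap Q=\set{S(T)}$ one checks the inclusion $\set{\p R(n)\cap Q\neq\emptyset}\subseteq\set{\p(S[T,n])\cap Q\neq\emptyset}$ (any vertex of $Q$ lying in $\p R(n)$ is visited only after $T$, and its missing neighbour is also missing from the shorter range). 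Applying the lemma to the post-$T$ walk gives a conditional bound $\leq C\log^2 k/\log(n/2)$, so this part of \eqref{eqn: hit factor} follows using $\Pr\br{T\leq n/2}\leq\Pr\br{R(n)\cap Q\neq\emptyset}$.

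For the complementary event $\set{T>n/2}$ I would pass to the time-reversed walk $\hat S(t)=S(n-t)$. Reversibility makes $\hat S$ a simple random walk (from the random point $S(n)$), its range and inner boundary coincide with those of $S$, and its first visit to $Q$ occurs at $\hat T=n-T'$; since $T'\geq T>n/2$ we have $\hat T<n/2$. Now the identical forward argument applied to $\hat S$ at the stopping time $\hat T$ — using the inclusion $\set{\p R(n)\cap Q\neq\emptyset}\subseteq\set{\p(\hat S[\hat T,n])\cap Q\neq\emptyset}$ and the lemma for the walk started at $\hat S(\hat T)\in Q$ of length $n-\hat T>n/2$ — yields the same bound, with $\Pr[\hat T<n/2]\leq\Pr\br{R(n)\cap Q\neq\emptyset}$ (the reversed walk meets $Q$ exactly when $R(n)$ does). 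Adding the two cases proves \eqref{eqn: hit factor}. The delicate point is that $\hat T$ is a genuine stopping time for $\hat S$, whereas the forward last-visit time $T'$ is not a stopping time for $S$; the reversal is precisely what converts the awkward ``condition on the last visit and look backwards'' into a legitimate strong Markov application.

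Finally, for $k>n^{1/4}$ I would abandon \eqref{eqn: hit factor} and argue by confinement: $R(n)$ lies in the ball of radius $\max_{0\leq j\leq n}\norm{S(j)}$ about the origin, which meets at most $O\big((\max_j\norm{S(j)}/k)^2+1\big)$ squares of $\Lambda(k)$, so Lemma~\ref{lem: maximum of E[RW]} gives $\E\br{M(k,n)}\lesssim n/k^2+1$. Since $k>n^{1/4}$ forces $\log^2 k>\tfrac1{16}\log^2 n$, this is $\lesssim\max\set{1,\tfrac{n}{k^2}\tfrac{\log^2 k}{\log^2 n}}$, which completes the proof in this regime and, together with the previous paragraphs, for all $k,n$.
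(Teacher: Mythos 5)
Your outer structure---the confinement bound via Lemma~\ref{lem: maximum of E[RW]} for $k>n^{1/4}$, then a per-square estimate of the form $\frac{\log^2k}{\log n}\cdot\Pr[\text{hit}]$ summed over near squares (Lemma~\ref{lem: Prob R hits near Q}) and far squares (Lemma~\ref{lem: Prob R hits Q}) with lattice sums of order $n/k^2$---matches the paper. But your central per-square inequality, that $\Pr\br{\p R(n)\cap Q(z,k)\neq\emptyset}\leq C\frac{\log^2k}{\log n}\Pr\br{R(n)\cap Q(z,k)\neq\emptyset}$ for \emph{every} $z$, is false, and the flaw is the time-reversal step. The reversed process $\hat S(t)=S(n-t)$ is not a simple random walk from the random point $S(n)$: its increments are i.i.d., but they \emph{determine} its starting point ($\hat S(0)$ is minus the sum of its own increments), so conditionally on the reversed filtration at $\hat T$ the continuation $\hat S[\hat T,n]$ is a random-walk \emph{bridge} conditioned to sit at the origin at time $n$, not a free walk. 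Lemma~\ref{lem: boundary in square}, whose proof runs repeated free excursions via the strong Markov property (through Lemmas~\ref{lem: from m to k} and~\ref{lem: from z to alpha z}), does not apply to such a bridge. This is not a removable technicality: take $k$ fixed and $z\in\Lambda(k)$ at distance of order $n$. Conditioned on $\set{R(n)\cap Q(z,k)\neq\emptyset}$, the large-deviation cost $t\,I(\norm z/t)$ of reaching $Q$ by time $t$ is decreasing in $t$, so the first hit occurs near time $n$ with only $O(1)$ steps remaining, and then some visited vertex of $Q$ has an unvisited neighbour with conditional probability bounded below by an absolute constant; hence the left side is of order $\Pr\br{R(n)\cap Q\neq\emptyset}$, while your right side carries the vanishing factor $C\log^2k/\log n$. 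In the reversed picture this is precisely the case where the bridge, forced back to $0$ at distance of order $n$, enters $Q$ and leaves at once rather than filling $Q^+$ as a free walk would---the behaviour your appeal to Lemma~\ref{lem: boundary in square} is supposed to rule out.

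The forward half of your argument (on $\set{T\leq n/2}$, condition at $T$, use the inclusion $\set{\p R(n)\cap Q\neq\emptyset}\subseteq\set{\p(S[T,n])\cap Q\neq\emptyset}$, and apply the lemma to a walk with at least $n/2$ remaining steps) is sound, and the paper does essentially this, except with the cutoff at $n-\sqrt n$ instead of $n/2$: a remaining budget of $\sqrt n$ steps still gives $\log\sqrt n=\tfrac12\log n$, hence the same factor $\log^2k/\log n$ in \eqref{eqn: r and q given tau}. The squares first hit during the last $\sqrt n$ steps are then disposed of not by any per-square probability bound but by a deterministic \emph{global count}: the walk can enter at most $4\sqrt n$ new squares in $\sqrt n$ steps, and one checks $4\sqrt n\leq C\max\set{1,\frac n{k^2}\frac{\log^2k}{\log^2n}}$ throughout $2\leq k\leq n^{1/4}$. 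This counting step---the ``globally determined number of active boxes'' advertised in the introduction---is exactly what your reversal was intended to replace, and is what you need to restore to close the gap.
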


\begin{proof}
Fix $k,n \in \N$.
For $z \in \Z^2$, the event $\set{\p R(n) \cap Q(z,k) \neq \emptyset}$ implies the event
$$\Big\{\max_{0 \leq j \leq n} \norm{S(j)} \geq \norm{z} - \sqrt{2} (k + 1) \Big\}.$$
We start with an a-priori bound. Using Lemma~\ref{lem: maximum of E[RW]},
\begin{align*}
\E \br{ M(n,k) } & \leq \sum_{z \in \Lambda(k)}
\Pr \Big[ \norm {z} \leq \max_{0 \leq j \leq n} \norm{S(j)} + \sqrt{2} (k + 1) \Big] \\
& \leq C_1 \cdot \max \set{ 1, k^{-2}  \cdot \E \Big[ \max_{0 \leq j \leq n} \norm{S(j)}^2 \Big] } \\
& \leq C_2 \cdot \max \set{1 , \frac{n}{k^2} } ,
\end{align*}
for some constants $C_1,C_2>0$.
Thus, we can assume without loss of generality that $k < k+1 \leq (n-\sqrt{n})^{1/4} \leq n^{1/4}$.

Let
$$ \tau_Q(z) = \inf \set{ t \geq 0 \ : S(t) \in Q(z,k+1) }  $$
%The event $\set{\tau_Q(z) \leq \ell}$ implies the event
%$\set{\max_{0 \leq j \leq \ell} \norm{S(j)} \geq \norm{z} - \sqrt{2} k}$.
and let
$$J(z,k,n) = \1{ \tau_Q(z) \leq n - \sqrt{n} } \cdot I(z,k,n).$$
For all $z \in \Lambda(k)$, a.s.
$$ I(z,k,n) \leq \1{ n- \sqrt{n} < \tau_Q(z) \leq n } + J(z,k,n) . $$
Summing over all $z \in \Lambda(k)$, a.s.
\begin{align} \label{eqn: M is sum}
 M(n,k) \leq 4 \sqrt{n} + \sum_{z \in \Lambda(k)} J(z,n,k) .
\end{align}
By the strong Markov property at time $\tau_Q(z)$ and Lemma \ref{lem: boundary in square},
there exists a constant $C_3>0$ such that a.s.
\begin{align} \label{eqn: r and q given tau}
\Pr \Br{ \p R(n) \cap Q(z,k) \neq \emptyset }{ \tau_Q(z) \leq n-\sqrt{n} } &
\leq C_3 \cdot \frac{\log^2 k }{\log n} .
\end{align}
By Lemma~\ref{lem: Prob R hits near Q},
there exists a constant $C_4>0$ such that for all $z \in \Z^d$ with $1 \leq \norm{z} < 5 \sqrt{n}$,
$$ \Pr \br{ \tau_Q(z) \leq n - \sqrt{n} } \leq C_4 \cdot \frac{\log (10 \sqrt{n} / \norm{z})}{\log n} , $$
which implies
\begin{align} \label{eqn: Pr[J(z,n,k)] leq}
\Pr [ J(z,k,n) ] \leq C_5 \cdot \frac{\log^2 k }{\log n} \cdot \frac{\log (10 \sqrt{n} / \norm{z})}{\log n},
\end{align}
for some constant $C_5>0$.

Denote $\Gamma = 5 \sqrt{n} / (2k+1)$.
Summing over all $z \in \Lambda(k)$ such that $2 \leq \norm{z} < 5 \sqrt{n}$,
\begin{align} \label{eqn: sum for first part of M}
\nonumber \sum_{\substack{ z \in \Lambda(k) \\ 2 \leq \norm{z} < 5\sqrt{n} } } & \log (10 \sqrt{n} / \norm{z})
\leq \sum_{\substack{ x,y \in \Z \\ 2 \leq x^2+y^2 < \Gamma^2 } }  \log (2 \Gamma / \sqrt{x^2+y^2} ) \nonumber \\
& \leq C_6 \Gamma \sum_{2 \leq x \leq \Gamma } \log (2 \Gamma / x)
% \leq  C_6 \Gamma  \int_1^{\Gamma} \log (2 \Gamma / x)  \nonumber \\
%& \leq C_6 \Gamma \br{ x (\log (2 \Gamma / x)  +1) }_{1}^{\Gamma}
 \leq C_7 \Gamma^2 ,
\end{align}
for some constants $C_6,C_7 > 0$.
Plugging \eqref{eqn: sum for first part of M} into \eqref{eqn: Pr[J(z,n,k)] leq}, and
summing over all $z \in \Lambda(k)$ such that $\norm{z} < 5 \sqrt{n}$, we get
\begin{align} \label{eqn: first part of M}
\sum_{z \in \Lambda(k) : \norm{z} < 5\sqrt{n}} & \Pr[ J(z,k,n) ]
\leq C_8 \cdot \frac{\log^2 k }{\log^2 n} \cdot \frac{n}{k^2} ,
\end{align}
for some constant $C_8>0$.
In addition, by Lemma~\ref{lem: Prob R hits Q},
there exist constants $C_9,C_{10}>0$ such that for every $z \in \Lambda(k)$ such that $\norm{z} \geq 5 \sqrt{n}$,
$$ \Pr \br{ \tau_Q(z) \leq n - \sqrt{n} } \leq
\frac{C_9}{\log n} \cdot \exp \Big( - C_{10} \frac{\norm{z}^2}{n} \Big) , $$
which implies, using \eqref{eqn: r and q given tau},
\begin{align*}
\Pr [ J(z,k,n) ]
%& \leq C_3 \cdot \frac{\log^2 k }{\log n} \cdot \frac{C_9}{\log n} \cdot \exp \Big( - C_{10} \frac{\norm{z}^2}{n} \Big) \\
& \leq C_{11} \cdot \frac{\log^2 k }{\log^2 n} \cdot \exp \Big( - C_{10} \frac{\norm{z}^2}{n} \Big) ,
\end{align*}
for some constant $C_{11}>0$.
Summing over all $z \in \Lambda(k)$ such that $\norm{z} \geq 5 \sqrt{n}$,
\begin{align} \label{eqn: M second}
\nonumber \sum_{z \in \Lambda(k) : \norm{z} \geq 5\sqrt{n}} \Pr[ J(z,k,n) ]
& \leq C_{11} \cdot \frac{\log^2 k }{\log^2 n} \sum_{z \in \Lambda(k) : \norm{z} \geq 5\sqrt{n}}
 \exp \Big( - C_{10} \frac{\norm{z}^2}{n} \Big) \\
% \nonumber & \leq C_{11} \cdot \frac{\log^2 k }{\log^2 n}
% \sum_{z \in \Z^2}  \exp \Big( - C_{10} \frac{(2k+1)^2 \norm{z}^2}{n} \Big) \\
% \nonumber & \leq C_{11} \cdot \frac{\log^2 k }{\log^2 n}
% \Big( \sum_{x \in \Z}  \exp \Big( - C_{10} \frac{(2k+1)^2 x^2}{n} \Big) \Big)^2 \\
& \leq C_{12} \cdot \frac{\log^2 k }{\log^2 n} \cdot \frac{n}{k^2},
\end{align}
for some constant $C_{12} > 0$.
The lemma follows by \eqref{eqn: M is sum}, \eqref{eqn: first part of M} and \eqref{eqn: M second}.
\end{proof}

For $k < n \in \N$, let $\p(k,n)$ be the vector $(I(z,k,n) )_{z \in \Lambda(k) \cap [-2n,2n]^2}$.
Note that
$$ M(k,n) = \sum_{z \in \Lambda(k)} I(z,k,n) = \sum_{z \in \Lambda(k) \cap [-2n,2n]^2} I(z,k,n) . $$

\begin{lem} \label{lem: entropy p(k,n) given p(k',n)}
Let $k,\ell, n \in \N$ and let $k' = (2 \ell+1)k+\ell$.
Then,
$$ H(\p(k,n) \ | \ \p(k',n)) \leq \E [M(k',n)] \cdot (2 \ell +1)^2 . $$
\end{lem}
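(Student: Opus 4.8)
The plan is to exploit the fact that the scale-$k'$ partition refines exactly into the scale-$k$ partition, so that conditioning on which coarse squares meet $\p R(n)$ already forces all but a controlled number of the fine coordinates to vanish. First I would record the geometric nesting. Since $2k'+1 = (2\ell+1)(2k+1)$, we have $\Lambda(k') = (2\ell+1)(2k+1)\Z^2 \subseteq (2k+1)\Z^2 = \Lambda(k)$, and each coarse square $Q(w,k')$, $w \in \Lambda(k')$, is the disjoint union of exactly $(2\ell+1)^2$ fine squares $Q(z,k)$, $z \in \Lambda(k)$. Writing $w(z) \in \Lambda(k')$ for the center of the unique coarse square containing $Q(z,k)$, the decisive monotonicity is immediate from $\p R(n) \cap Q(z,k) \subseteq \p R(n) \cap Q(w(z),k')$, namely
$$ I(z,k,n) \leq I(w(z),k',n) . $$
In words, a fine square can be active only if its coarse parent is active.

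Next I would reduce the conditional entropy to a counting estimate. Using the identity $H(X\mid Y) = \sum_y \Pr[Y=y]\,H(X\mid Y=y)$, which follows directly from the definition $H(X\mid Y) = H(X,Y) - H(Y)$, with $X = \p(k,n)$ and $Y = \p(k',n)$, it suffices to bound, for each realization $y$ of the coarse vector, the number of values that $\p(k,n)$ can take on the event $\set{\p(k',n) = y}$, and then apply clause \ref{item: H leq log} of Proposition~\ref{prop: properties of entropy} to the conditional law. By the monotonicity above, once $y$ is fixed, every coordinate $I(z,k,n)$ whose parent $w(z)$ is inactive in $y$ is forced to equal $0$; only the fine squares sitting inside the $m(y) := \norm{y}_1$ active coarse squares remain free. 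As each active coarse square contains $(2\ell+1)^2$ fine squares, the number of admissible fine vectors is at most $2^{(2\ell+1)^2 m(y)}$, whence
$$ H(\p(k,n) \mid \p(k',n) = y) \leq (2\ell+1)^2\, m(y) . $$

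Finally I would average. Because $R(n)$, and hence $\p R(n)$, is contained in $[-n,n]^2$, every active square at either scale has its center in $[-2n,2n]^2$; thus $\p(k',n)$ records all active coarse squares and $m(y)$ coincides with the value of $M(k',n)$ on $\set{\p(k',n)=y\,}$. Summing the per-$y$ bound against $\Pr[\p(k',n)=y]$ then gives
$$ H(\p(k,n) \mid \p(k',n)) \leq (2\ell+1)^2 \sum_y \Pr[\p(k',n)=y]\, m(y) = (2\ell+1)^2\, \E[M(k',n)] , $$
which is the claim.

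The argument is elementary once the refinement is in place, and I do not expect a genuine obstacle; the only point requiring care is the bookkeeping at the edge of the window $[-2n,2n]^2$, i.e.\ verifying that no active fine square has its coarse parent outside the window, so that conditioning on $\p(k',n)$ truly determines the status of every coarse square relevant to $\p(k,n)$. This is guaranteed by the deterministic containment $\p R(n) \subseteq [-n,n]^2$ together with $k \leq k' \leq n$ in the regime of interest, which keeps all active centers well inside $[-2n,2n]^2$.
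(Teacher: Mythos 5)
Your proof is correct and follows essentially the same route as the paper's: the tiling of each $Q(z',k')$ into $(2\ell+1)^2$ fine squares, the monotonicity $I(z,k,n) \leq I(w(z),k',n)$, and the count of at most $2^{(2\ell+1)^2 M(k',n)}$ admissible fine vectors given the coarse vector, bounded via clause \ref{item: H leq log} of Proposition~\ref{prop: properties of entropy}. You merely spell out two steps the paper leaves implicit --- the averaging $H(X\mid Y)=\sum_y \Pr[Y=y]H(X\mid Y=y)$ and the check that active coarse parents stay inside the window $[-2n,2n]^2$ (which the paper addresses by its remark preceding the lemma that $M(k,n)$ is unchanged by restricting to the window).
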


\begin{proof}
For any $z' \in \Lambda(k')$, the square $Q(z',k')$ is of side length
$2k'+1 = (2\ell+1)(2k+1)$, and so $Q(z',k')$ can be tiled by $(2\ell+1)^2$ disjoint squares
from the collection $\set{ Q(z,k) }_{z \in \Lambda(k)}$.

If $Q(z,k) \subset Q(z',k')$, then $I(z,k,n) \leq I(z',k',n)$.  Thus,
conditioned on the vector $\p(k',n)$, there are at most $2^{M(k',n) \cdot (2\ell+1)^2 }$
possibilities for the vector $\p(k,n)$.  By clause \ref{item: H leq log} of Proposition
\ref{prop: properties of entropy}, and by the definition of conditional entropy,
$H(\p(k,n)\ |\ \p(k',n)) \leq \E [M(k',n) \cdot (2 \ell+1)^2]$.
\end{proof}

\begin{lem} \label{lem: upper bound}
There exists a constant $C_2 > 0$ such that for all $n$,
$$ H(R(n)) \leq C_2 \frac{n}{\log^2 (n)} . $$
\end{lem}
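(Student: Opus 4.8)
The plan is to bound $H(R(n))$ in two stages. Since $Q(z,0)=\set{z}$, the vector $\p(0,n)$ is exactly the indicator of the set $\p R(n)$, and as the walk moves distance at most $n$ we have $\p R(n)\subseteq[-2n,2n]^2$, so $\p(0,n)$ records $\p R(n)$ faithfully. By clause~\ref{item: chain cond. entropy} of Proposition~\ref{prop: properties of entropy}, $H(R(n))\leq H(\p(0,n))+H(R(n)\mid\p(0,n))$, and I would bound each summand separately by $O(n/\log^2 n)$. Throughout, the governing quantity is $\E[|\p R(n)|]$, which is $O(n/\log^2 n)$: every boundary point lies in a unique square of $\set{Q(z,2)}_{z\in\Lambda(2)}$, so $|\p R(n)|\leq 25\,M(2,n)$, and Lemma~\ref{lem: number of squares with boundary} gives $\E[M(2,n)]=O(n/\log^2 n)$.

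For the conditional term I would observe that $R(n)$ is almost determined by $\p R(n)$. Each bounded connected component of $\Z^2\setminus\p R(n)$ lies either entirely inside $R(n)$ or entirely in its complement, since an edge crossing from $R(n)$ to its complement would have its $R(n)$-endpoint in $\p R(n)$. Thus $R(n)$ is recovered from $\p R(n)$ once we reveal one bit per bounded complementary component (inside versus hole; the unbounded component is always outside), and a standard planar Euler-characteristic count bounds the number of such components by $O(|\p R(n)|)$. Consequently $H(R(n)\mid\p(0,n))\leq C\,\E[|\p R(n)|]=O(n/\log^2 n)$.

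To control $H(\p(0,n))$ I would telescope over scales. Fix $k_0=0$ and $k_i=3k_{i-1}+1$, so that Lemma~\ref{lem: entropy p(k,n) given p(k',n)} applies with $\ell=1$ and $k_i\asymp 3^i$, stopping at the first $k_T\geq n$. Iterating clause~\ref{item: chain cond. entropy} gives
\[
H(\p(0,n))\leq H(\p(k_T,n))+\sum_{i=1}^{T} H\big(\p(k_{i-1},n)\mid\p(k_i,n)\big).
\]
At scale $k_T\geq n$ only $O(1)$ squares meet $[-2n,2n]^2$, so clause~\ref{item: H leq log} gives $H(\p(k_T,n))=O(1)$. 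Each conditional term is at most $9\,\E[M(k_i,n)]$ by Lemma~\ref{lem: entropy p(k,n) given p(k',n)}. The finest refinement $k_1=1$ contributes $9\,\E[M(1,n)]\leq 9\,\E[|\p R(n)|]=O(n/\log^2 n)$ (here the sharp estimate of Lemma~\ref{lem: number of squares with boundary} degenerates, so I bound $M(1,n)\leq|\p R(n)|$ directly). For $2\leq k_i\leq\sqrt n$ the lemma yields $\E[M(k_i,n)]=O\big(\tfrac{n}{k_i^2}\tfrac{\log^2 k_i}{\log^2 n}\big)$, and since $k_i\asymp 3^i$ these sum to $O\big(\tfrac{n}{\log^2 n}\sum_i i^2 9^{-i}\big)=O(n/\log^2 n)$; the $O(\log n)$ remaining scales with $k_i\geq\sqrt n$ each cost $O(1)$. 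Adding the two stages proves the lemma.

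The main obstacle is the telescoping sum: the bound survives only because the refinement ratio $2\ell+1$ is kept constant, so that the factor $(2\ell+1)^2$ stays bounded while the per-scale cost $\E[M(k_i,n)]\approx\tfrac{n}{k_i^2}\tfrac{\log^2 k_i}{\log^2 n}$ forms a convergent series dominated by its finest terms, which are already of the target order $n/\log^2 n$; a coarser refinement would inflate $(2\ell+1)^2$ and destroy the estimate. A secondary subtlety is the decoding step, where a connected set is not determined by its boundary alone (a filled square and a hollow ring share the same boundary), forcing the topological count of complementary components, which luckily costs only $O(\E[|\p R(n)|])$ bits.
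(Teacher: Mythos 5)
Your proof is correct, and its core --- the multiscale telescoping of $H(\p(0,n))$ over the scales $k_{j+1}=3k_j+1$, using Lemma~\ref{lem: entropy p(k,n) given p(k',n)} with $\ell=1$ and Lemma~\ref{lem: number of squares with boundary}, summed as $\frac{n}{\log^2 n}\sum_j j^2 9^{-j}$ plus $O(\log n)$ coarse scales --- is exactly the paper's argument. Where you genuinely diverge is the reduction from $R(n)$ to $\p(0,n)$: the paper simply asserts that $\p(0,n)$ determines $R(n)$ and invokes clause~\ref{item: H(f(X)|X)} of Proposition~\ref{prop: properties of entropy}, paying nothing, whereas you add the term $H(R(n)\mid\p(0,n))$ and bound it by one bit per bounded component of $\Z^2\setminus\p R(n)$, at a cost $O(\E[|\p R(n)|])=O(n/\log^2 n)$. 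Your caution is in fact warranted: the inner boundary does \emph{not} determine the set --- a walk that traces a one-cell-thick square annulus (reached by a path from the origin) and one that additionally fills its hollow produce distinct ranges with identical inner boundaries, both of positive probability at the same $n$ --- so the paper's one-line determination claim is literally false as stated, and your component-counting step is precisely the patch needed (the $O(|\p R(n)|)$ count can even be made elementary: map each bounded component to the left neighbour of its lexicographically minimal vertex, which lies in $\p R(n)$, and this map is injective). Two smaller points also in your favour: you notice that Lemma~\ref{lem: number of squares with boundary} degenerates at $k=1$, where $\log^2 k=0$, and route around it via $M(1,n)\leq|\p R(n)|\leq 25\,M(2,n)$, a degeneracy the paper silently absorbs into its constant $(j+1)^2/9^{j+1}$ at $j=0$; and your $O(1)$ bound for the top scale versus the paper's exact $H(\p(k_m,n))=0$ is an immaterial difference. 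In short, the paper's route buys brevity; yours buys an airtight reduction at the price of easy topological bookkeeping for the holes.
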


\begin{proof}
Since the vector $\p(0,n)$ determines $R(n)$,
clauses \ref{item: H(f(X)|X)} and \ref{item: chain cond. entropy} of Proposition~\ref{prop: properties of entropy}
yield that $H(R(n)) \leq H(\p(0,n))$.

Set $k_0 = 0$, and for $j \geq 0$, define inductively
$k_{j+1} = 3 k_j + 1$.
For every $j \geq 1$, since $3 k_j \leq k_{j+1} \leq 4 k_j$,
it holds that $\frac{\log k_{j}}{k_{j}} \leq 9 j 3^{-j}$.
Let $m>0$ be the smallest $j$ such that $k_j > n$.
The entropy of $\p(k_m,n)$ is zero.
By Lemmas~\ref{lem: number of squares with boundary} and~\ref{lem: entropy p(k,n) given p(k',n)},
for $0 \leq j \leq m-1$, there exist universal constants $c_2,c_3>0$ such that
\begin{align*}
  H(\p(k_j,n) \ | \ \p(k_{j+1},n) ) %& \leq c_2 \cdot \frac{\log^2 k_{j+1} }{k_{j+1}^2 } \cdot \frac{n}{ \log^2 n}
 \leq c_3 \cdot \max \Big\{ 1, \frac{n}{\log^2 n} \cdot \frac{(j+1)^2}{9^{j+1}} \Big\} .
\end{align*}
Using clause \ref{item: chain cond. entropy} of Proposition \ref{prop: properties of entropy},
there exists a constant $C>0$ such that
\[ H(\p(0,n)) \leq
  \sum_{j=0}^{m-1} H(\p(k_j,n) \ | \ \p(k_{j+1},n) ) + H(\p(k_m,n)) \leq
  C \cdot \frac{n}{\log^2 n} .\qedhere
\]
%With hindsight, set $k_0 = 0$, and for $j \geq 0$ define inductively
%$\ell_j = \max \set{ \lfloor \frac{2^{(2 k_j + 1) /(j+1)} - k_j}{2 k_j +1} \rfloor , 1}$, and
%$k_{j+1} = (2\ell_j +1)k_j + \ell_j$.
%Since $k_{j+1} \geq 3 k_j +1$, this
%definition ensures that $\frac{\log k_{j+1}}{2 k_j +1} \leq \frac{c_1}{j+1}$,
%for some universal constant $c_1>0$.
%
%
%Let $m>0$ be the smallest $j$ such that $k_j > \sqrt{n}$.
%By Lemma \ref{lem: number of squares with boundary},
%$H(\p(k_m,n) ) \leq c_2$, for some constant $c_2>0$.
%By Lemma \ref{lem: entropy p(k,n) given p(k',n)}, for $0 \leq j \leq m-1$,
%there exists a universal constant $c_3>0$ such that
%\begin{align*}
%  H(\p(k_j,n)| \p(k_{j+1},n) ) & \leq c_2 \cdot (2 \ell_j +1)^2 \cdot  \frac{\log^2 k_{j+1} }{k_{j+1}^2 } \cdot \frac{n}{ \log^2 n} \\
%  & \leq c_3 \cdot \frac{n}{\log^2 n} \cdot \frac{\log^2 k_{j+1} }{(2k_j+1)^2 } \leq c_3 \cdot \frac{n}{\log^2 n} \cdot \frac{1}{(j+1)^2} ,
%\end{align*}
%where we have used our special choice of $\ell_j$ in the last inequality.
%
%Thus, summing over all $j$, we have that for some constant $C>0$,
%by clause \ref{item: chain cond. entropy} of Proposition \ref{prop: properties of entropy},
%$$ H(\p(0,n)) \leq \sum_{j=0}^{m-1} H(\p(k_j,n)|\p(k_{j+1},n) ) + H(\p(k_m,n) \leq C \cdot \frac{n}{\log^2 n} . $$
\end{proof}

\begin{rem}
The proof of Lemma \ref{lem: upper bound}
shows that provided one can calculate the different conditional probabilities
(e.g., with unlimited computational power), one can sample the range of a random walk
using only order $n/\log^2 n$ bits.
\end{rem}

\section{Concluding Remarks and Problems for Further Research}

\subsection{Extracting Entropy}
\label{scn: extracting entropy}

Lemma \ref{lem: entropy lower bound} shows that the entropy of $R(n)$ in two dimensions
is at least $c_2 n/\log^2 n$.  It is interesting to note that one can extract order of $n/\log^2 n$
almost uniformly distributed random bits,
%independent bits
by observing a sample of the range.  We sketch the construction.

Consider the two configurations that appear in Figure \ref{fig: configurations}.
Symmetry implies that conditioned on outside of the configuration, both have the same probability
of occurring.  Thus, any occurrence of such a configuration in the range of the random walk gives
an independent bit, e.g., setting the bit to be $1$ if the right configuration occurs,
and $0$ if the left configuration occurs.
Considerations similar to those raised in the proofs above show that the expected
number of such configurations is of order $n/\log^2 n$.

\begin{figure}[htbp]
\centering
\includegraphics[clip=true]{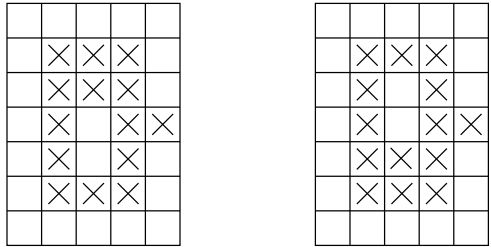}
\caption{Two symmetric configurations.  X's are vertices occupied
by the range.} \label{fig: configurations}
\end{figure}

%\begin{figure}[htbp]
%\centering
%\begin{minipage}[t]{0.9\textwidth}
%\includegraphics[width=\textwidth,clip=true]{SCP_twice(600)}
%\caption{\small{\emph{LERW (black) and simple random walk (gray)
%stopped on exiting the unit disc.  The underlying graphs are
%$\Z^2$ (left) and the super-critical percolation cluster with
%parameter $0.75$ (right). The mesh size is $1/600$.}}}\label{fig:
%SCP}
%\end{minipage}
%\end{figure}

\subsection{Intersection Equivalence}

Consider the $n \times n$ square around $0$ in $\Z^2$,
and consider the following procedure.  Divide the square into $4$ squares
of side length $n/2$.  Retain each of the squares with probability $1/2$,
independently.
Continue inductively: at level $k$, divide each remaining square of side length
$n 2^{-(k-1)}$ into $4$ squares of side length $n 2^{-k}$, and retain
each one with probability $k/(k+1)$ independently.

This procedure produces a random subset of the $n \times n$ square, denote this set by $Q(n^2)$.
In \cite{Peres}, Peres shows that
the sets $Q(n^2)$ and $R(n^2)$ are {\em intersection equivalent};
that is, there exist constants $c,C>0$ such that for any set $A \subset \Z^2$,
$$ c \leq \frac{\Pr [ Q(n^2) \cap A \neq \emptyset ] }{ \Pr [ R(n^2) \cap A \neq \emptyset ] } \leq C . $$
%It is interesting to note that
The entropy $H(Q(n^2))$ is of order $n^2 / \log^2(n)$, as is $H(R(n^2))$.
Note that intersection equivalence does not imply or follow from equal entropy.
See \cite{Peres} for more details.

\subsection{Open Questions}

Let $G$ be an infinite graph, and let $\set{S(n)}_{n \geq 0}$ be a simple random walk
on $G$.  Let $R(n) = \set{S(0), S(1), \ldots, S(n)}$ be the range of the walk at time $n$.
Let $H(n)$ be the entropy of $R(n)$.

Our results above suggest the following natural questions.

\begin{itemize}

\item Assume $G$ is vertex transitive (that is, for any two vertices $x,y$
there exists an automorphism of $G$ taking $x$ to $y$).  Is it true that if
$S(\cdot)$ is transient then $H(n)$ grows linearly in $n$?
It is not difficult to produce examples of non-transitive graphs, that are transient but have
sub-linear entropy.

\item How small can $H(n)$ be in transient graphs?  It is possible to construct
(spherically symmetric) trees that are transient but have $H(n) = O(\log^2 n)$.
Is it possible to get a smaller entropy?

%% \item Let $d(x,y)$ denote the graphical distance of $x$ and $y$.
%% Assume that there exists a constant $c>0$ such that
%% $$ \liminf_{n \to \infty} d(S(n),S(0))/n \geq c \quad \textrm{a.s.} $$
%% Is it true that $H(n)$ grows linearly in $n$?

%% \item What is the entropy of a heavy tailed random walk on $\Z$?  That is:
%% Let $\set{X_j}_{j \geq 1}$ be i.i.d. heavy-tailed symmetric integer valued random variables
%% (e.g. take $X_j$ with Cauchy distribution).  Let $S(0)=0$ and let
%% $$ S(n) = \sum_{j=1}^n X_j . $$
%% Let $R(n) = \set{ S(0),S(1), \ldots, S(n)}$ and let $H(n)$ be the entropy of $R(n)$.
%% What
%% %are
%% is the asymptotic behavior
%% % the asymptotics
%% of $H(n)$ as a function of $n$?

%\item Our results show that the entropy of the range of a random walk on $\Z^d$
%is of the same order as the size of the boundary of the range.  A natural question is what is the %relation
%between the entropy of the range and the size of the boundary in general graphs?

\end{itemize}

% -----------------------------------------------------------------------------------
% ---------------------------- BIBLIOGRAPHY -----------------------------------------
% -----------------------------------------------------------------------------------

\bibliographystyle{acm}

%\bibliography{../bibliography}

\end{document}